\documentclass{aptpub2}
% Version 6mars18

\authornames{S. Grusea, S. Mercier} % insert the authors here for use in running head
\shorttitle{Distribution for the maximal segmental score of a Markov chain} % insert short title here for use in running head

% Put any of your own definitions here.
%\numberwithin{equation}{section}  % If you number theorems, etc. within sections,
                                  % then please uncomment this line to number                                  % equations with sections too.

\usepackage{graphicx}
\begin{document}

\def \P{\mathbb P}
\def \R{\mathbb R}
\def \N{\mathbb N}
\def \Z{\mathbb Z}
\def \E{\mathbb E}
\def \X{\mathbb X}
\def \Y{\mathbb Y}
\def \A{\mathbb A}
\def \B{\mathbb B}
\def \V{\mathbb V}
\def \W{\mathbb W}

\def \Seq{\mathbb S}

\def \leqp{\leqslant}
\def \geqp{\geqslant}
\newcommand{\1}{{1\hspace{-0.2ex}\rule{0.12ex}{1.61ex}\hspace{0.5ex}}}

\title{Improvements on the distribution of maximal segmental scores in a Markovian sequence} % insert title - use \\ if it requires more than one line.

\authorone[Institut de Math\'{e}matiques de Toulouse, Universit\'e de Toulouse, INSA de Toulouse]{S. Grusea}
\addressone{Institut National des Sciences Appliqu\'{e}es, 135 avenue de Rangueil, 31400, Toulouse, France}
\authortwo[Institut de Math\'{e}matiques de Toulouse, Universit\'e de Toulouse, Jean Jaur\`es]{S. Mercier} % Affiliation is just the name of your university or institution
\addresstwo{Institut de Math\'{e}matiques de Toulouse, UMR5219, 
Universit\'{e} de Toulouse 2 Jean Jaur\`{e}s, 5 all\'{e}es Antonio Machado, 31058, Toulouse, Cedex 09, France} % Your postal address goes here.

\begin{abstract}
Let $(A_i)_{i \geq 0}$ be a finite state irreducible aperiodic Markov chain and $f$ a lattice score function such that the average score is negative and positive scores are possible. Define $S_0:=0$ and $S_k:=\sum_{i=1}^k f(A_i)$ the successive partial sums, $S^+$ the maximal non-negative partial sum, $Q_1$ the maximal segmental score of the first non-negative excursion and $M_n:=\max_{0\leq k\leq\ell\leq n} (S_{\ell}-S_k)$ the \textit{local score} first defined by Karlin and Altschul \cite{KAl90}. We establish recursive formulae for the exact distribution of $S^+$ and derive new approximations for the distributions of $Q_1$ and $M_n$.  Computational methods are presented in a simple application case and comparison is performed between these new approximations and the ones proposed in \cite{KDe92} in order to evaluate improvements. 
\end{abstract}

\keywords{local score; Markov theory; limit theorems; maximal segmental score} % insert keywords separated by a semicolon

\ams{60J10; 60F05; 60G70}{60F10; 60K15} % insert the primary Maths Subject Classification number in the first bracket
% Markov chains; Limit theorems; extreme value thm
% Large deviations; Markov renewal theory        
         % and the secondary ams number(s) in the second bracket
         % e.g. \ams{60E20}{49G03;49F10}

%%%%%%%%%%%%%%%%%%%%%%
\section{Introduction}\label{Sec:Introduction}
%%%%%%%%%%%%%%%%%%%%%%

There is nowadays a huge amount of biological sequences available. The \textit{local score} for one sequence analysis, first defined by Karlin and Altchul in \cite{KAl90} (see Equation (\ref{Def:LocalScore}) below for definition) quantifies the highest level of a certain quantity of interest, e.g. hydrophobicity, polarity, etc..., that can be found locally inside a given sequence. This allows for example to detect atypical segments in biological sequences. In order to distinguish significantly interesting segments from the ones that could have appeared by chance alone, it is necessary to evaluate the $p$-value of a given local score. Different results have already been established using different probabilistic models for sequences: independent and identically distributed variables model (i.i.d.) \cite{MCC03,KAl90,KDe92,MDa01}, Markovian models \cite{KDe92, HMe07} or Hidden Markov Models \cite{DKM98}. In this article we will focus on the Markovian model.\par
An exact method was proposed by Hassenforder and Mercier \cite{HMe07} to calculate the distribution of the local score for a Markovian sequence, but this result is computationally time consuming for long sequences ($>10^3$).
Karlin and Dembo \cite{KDe92} established the limit distribution of the local score for a Markovian sequence and a random scoring scheme depending on the pairs of consecutive states in the sequence.
%the Markov chain transitions.
They proved that the distribution of the local score is asymptotically a Gumble distribution, as in the i.i.d. case. In spite of its importance, their result in the Markovian case is unforfunately very little cited or used in the literature. A possible explanation could be the fact that the random scoring scheme defined in \cite{KDe92} is more general than the ones classically used in practical approaches. In \cite{GRH06} and \cite{FBM17}, the authors verify by simulations that the local score in a certain dependence model follows a Gumble distribution, and use simulations to estimate the two parameters of this distribution. 

In this article we study the Markovian case for a more classical scoring scheme. We propose a new approximation for the distribution of the local score of a Markovian sequence. We compare it to the one derived from the result of Karlin and Dembo \cite{KDe92} and illustrate the obtained improvement in a simple application case. %We propose a computational method which may be useful in applications.

\subsubsection*{Mathematical framework}

Let  $(A_i)_{i \geq 0}$ be an irreducible and aperiodic Markov chain taking its values in a finite set ${\cal A}$ containing $r$ states denoted $\alpha$, $\beta$, $\dots$ for simplicity. Let ${\bf P}=(p_{\alpha\beta})_{\alpha,\beta}$ be its transition probability matrix and $(\pi_{\alpha})_{\alpha}$ its stationary frequency vector. In order to simplify the presentation, we suppose that ${\bf P}$ is positive ($\forall \alpha,\beta,\ p_{\alpha\beta}>0$).
We also suppose that the Markov chain is stationary, i.e. with initial distribution of $A_0$ given by $\pi$.
$\P_{\alpha}$ will stand for the conditional probability given $\{A_0 = \alpha\}$.
We consider a lattice score function $f: {\cal A}\rightarrow d\Z$, with $d\in\N$ being the lattice step. Note that, since ${\cal A}$ is finite, we have a finite number of possible scores.
Since the Markov chain $(A_i)_{i \geq 0}$ is supposed to be stationary, the distribution of $A_i$ is $\pi$ for every $i \geq 0$. We will simply denote $\E[f(A)]$ the average score. 

In this article we make the hypothesis that the average score is negative, i.e.
\begin{eqnarray}\label{Hyp:ScoreMoyNeg}
\E[f(A)]=\sum_{\alpha}f(\alpha)\pi_{\alpha}<0.
\end{eqnarray}
We will also suppose that for every $\alpha \in \mathcal{A}$ we have
\begin{eqnarray}\label{Hyp:ProbaScorePos}
\P_{\alpha}(f(A) > 0) > 0 \mbox{ and }\ \P_{\alpha}(f(A) < 0) > 0.
\end{eqnarray}

Let us introduce some definitions and notation. Let $S_{0}:=0$ and denote 
$$S_{k}:=\sum_{i=1}^kf(A_i),$$ 
for $k\geq 1$ the successive partial sums. Let $S^+$ be the \textit{maximal non-negative partial sum} 
$$S^+:=\max\{0,S_k : k \geq 0\}.$$ 

Further, let $\sigma^-:=\inf\{k\geqp 1:S_k<0\}$ be the time of the first negative partial sum. Note that $\sigma^-$ is an a.s.-finite stopping time due to (\ref{Hyp:ScoreMoyNeg}).

Let $K_0 := 0$. For $i \geq 1$, we denote $K_i := \inf\{k > K_{i-1} : S_{k} - S_{K_{i-1}} < 0\}$ 
the successive decreasing ladder times of $(S_k)_{k\geq 0}$. Note that $K_1 = \sigma^-$.

Let us now consider the subsequence $(A_i)_{ 0 \leq i \leq n}$ for a given length $n \in \mathbb{N}\setminus \{0\}$.
Denote $m(n): = \max\{i \geq 0 : K_i \leq n\}$ the random variable corresponding to the number of decreasing ladder times arrived before $n$. For every $i=1,\dots,m(n)$, we call the sequence $(A_j)_{K_{i-1}< j\leq K_{i}}$ the $i$-th non-negative excursion. 

Note that, due to the negative drift, we have $\E[K_1] < \infty$ (see Lemma \ref{lem:Esp_K1}) and $m(n) \to \infty$ \textit{a.s.} when $n \to \infty$. 
To every non-negative excursions $i=1,\dots,m(n)$ we associate a \textit{maximal segmental score} (called also \textit{height}) $Q_{i}$ defined by
$$Q_i : = \max_{K_{i-1}\leq k < K_{i}} (S_k - S_{K_{i-1}}).$$

First introduced by Karlin and Altschul in \cite{KAl90}, the \textit{local score}, denoted $M_n$, is defined as the maximum segmental score for a sequence of length $n$:
\begin{equation}\label{Def:LocalScore}
M_{n}:=\max_{0\leqp k\leqp \ell\leqp n}(S_{\ell}-S_{k}).
\end{equation}
Note that $M_{n}=\max(Q_1,\dots,Q_{m(n)},Q^*)$,
with $Q^*$ being the maximal segmental score of the last incomplete non-negative excursion $(A_j)_{K_{m(n)}< j\leq n}$.
Mercier and Daudin \cite{MDa01} give an alternative expression for $M_n$ using the Lindley process $(W_k)_{k \geq 0}$ describing the excursions above zero between the successive stopping times $(K_i)_{i\geq 0}$. With $W_0:=0$ and $W_{k+1}:=\max(W_{k}+f(A_{k+1}),0)$, we have
$M_{n}=\max_{0\leqp k\leqp n}W_k.$

\begin{rem}\label{KD}
Karlin and Dembo \cite{KDe92} consider a random score function defined on pairs of consecutive states of the Markov chain: they associate to each transition $(A_{i-1},A_{i})=(\alpha,\beta)$ a bounded random score $X_{\alpha\beta}$ whose distribution depends on the pair $(\alpha,\beta)$. Moreover, they suppose that, for $(A_{i-1},A_i)=(A_{j-1},A_j)=(\alpha,\beta)$, the random scores $X_{A_{i-1} A_i}$ and $X_{A_{j-1} A_j}$ are independent and identically distributed as $X_{\alpha\beta}$. The framework of this article corresponds to the case when the score function is deterministic, with $X_{A_{i-1}A_{i}}=f(A_i)$.\par
Note also that in our case the hypotheses (\ref{Hyp:ScoreMoyNeg}) and (\ref{Hyp:ProbaScorePos}) assure the so-called cycle positivity, i.e. the existence of some state $\alpha \in \mathcal{A}$
%cycle of states $\alpha=a_0,a_1,\dots,a_m=\alpha$ 
satisfying $$\P\left(\bigcap_{k=1}^{m-1} \{S_k>0\} \left |\ A_0=A_m=\alpha\right. \right) > 0.$$
In \cite{KDe92}, in order to simplify the presentation, the authors strengthen the assumption of cycle positivity by assuming that
$\P(X_{\alpha\beta} > 0) > 0 \text{ and } \P(X_{\alpha\beta} < 0) > 0$ for all $\alpha, \beta \in \mathcal{A}$ (see (1.19) of \cite{KDe92}), but precise that the cycle positivity is sufficient for their results to hold. Note that hypotheses (\ref{Hyp:ScoreMoyNeg}) and (\ref{Hyp:ProbaScorePos}) are usually verified in biological applications.
\end{rem}

In Section \ref{Sec:MainResults} we first introduce few more definitions and notation. Then we present the main results: a recursive result for the exact distribution of the maximal non-negative partial sum $S^+$ for an infinite sequence in Theorem \ref{res:exactS+}; based on the exact distribution of $S^+$, we further propose new approximations for the distribution of the height of the first non-negative excursion $Q_1$ in Theorem \ref{res:Q1} and for the  distribution of the local score $M_n$ for a sequence of length $n$ in Theorem \ref{res:Mn}.  Section \ref{Sec:Proofs} contains the proofs of the results of Section \ref{Sec:MainResults} and of some useful lemmas which use techniques of Markov renewal theory and large deviations. In Section \ref{sec:comp} we propose a computational method for deriving the quantities appearing in the main results. A simple scoring scheme is developed in Subsection \ref{subsec:simplecase}, for which we compare our approximations to the ones proposed by Karlin and Dembo \cite{KDe92} in the Markovian case.

%%%%%%%%%%%%%%%%%%%%%%%%%%%%
\section{Statement of the main results}\label{Sec:MainResults}
%%%%%%%%%%%%%%%%%%%%%%%%%%

\subsection{Definitions and notation}\label{subsec:notdef}

For every $\alpha, \beta \in \mathcal{A}$, we denote $q_{\alpha\beta}:=\P_{\alpha}(A_{K_1} = \beta)$ and
${\bf Q}:=(q_{\alpha\beta})_{\alpha,\beta}$.
Define $\mathcal{A}^-=\{\alpha\in\mathcal{A}:f(\alpha)<0\}$ and $\mathcal{A}^+=\{\alpha\in\mathcal{A}:f(\alpha)>0\}$. Note that the matrix ${\bf Q}$ is stochastic, with $q_{\alpha\beta}=0$ for $\beta \in \mathcal{A}\setminus \mathcal{A}^-$. Its restriction ${\bf \tilde Q}$ to $\mathcal{A}^-$ is stochastic and irreducible. The states $(A_{K_i})_{i \geq 1}$ of the Markov chain at the end of the successive non-negative excursions define a Markov chain on $\mathcal{A}^-$ with transition probability matrix ${\bf \tilde Q}$. 
For every $i \geq 2$ we thus have
$\P(A_{K_i} = \beta \ | A_{K_{i-1}} = \alpha ) = q_{\alpha \beta}$
if $\alpha, \beta \in \mathcal{A}^-$ and 0 otherwise.
%The state set $\mathcal{A}^-$ is an irreducible recurrent class for the Markov chain $(A_{K_i})_{i \geq 1}$ and $\alpha$ in $\mathcal{A}\setminus \mathcal{A}^-$ are transient states. . 
Denote $\tilde{z} > 0$ the stationary frequency vector of the irreducible stochastic matrix ${\bf \tilde Q}$ and let $z:=(z_{\alpha})_{\alpha\in\mathcal{A}}$ with $z_{\alpha}=\tilde{z}_{\alpha} > 0$ for $\alpha\in\mathcal{A}^-$ and $z_{\alpha}=0$ for $\alpha \in \mathcal{A} \setminus \mathcal{A}^-$. Note that $z$ is invariant for the matrix $Q$ i.e. $z{\bf Q}=z$.

\begin{rem}
Note that in Karlin and Dembo's Markovian model of \cite{KDe92} the matrix $Q$ is irreducible, thanks to their random scoring function and to their hypotheses recalled in Remark \ref{KD}.
\end{rem}

Using the strong Markov property, conditionally on $(A_{K_i})_{i \geq 1}$ the r.v. $(Q_i)_{i \geq 1}$ are independent, with the distribution of $Q_i$ depending only on $A_{K_{i-1}}$ and $A_{K_{i}}$.
%$$
%\P(Q_i \leq y | A_{K_{i-1}} = \alpha) = \P_{\alpha}(Q_1 \leq y)
%$$
%for every $i \geq 1$ and $\alpha \in \mathcal{A}^-$.

For every $\alpha \in \mathcal{A}$, $\beta \in \mathcal{A}^-$ and $y \geq 0$, let 
$$F_{\alpha \beta}(y):=\P_\alpha(Q_1 \leq y \ | A_{\sigma^-} = \beta)\ 
\mbox{ and }
\ F_\alpha(y): = \P_\alpha(Q_1 \leq y).$$
Note that for any $\alpha \in \mathcal{A}^-$ and $i \geq 1$, $F_{\alpha \beta}$ represents the cumulative distribution function (\textit{cdf}) of the height $Q_i$ of the $i$-th non-negative excursion given that it starts in state $\alpha$ and ends in state $\beta$, i.e.
$F_{\alpha \beta}(y)=\P(Q_i \leq y \ | A_{K_i} = \beta, A_{K_{i-1}} = \alpha)$, whereas $F_\alpha$ represents the \textit{cdf} of $Q_i$ conditionally on the $i$-th non-negative excursion starting in state $\alpha$, i.e. $F_{\alpha}(y)=\P(Q_i \leq y \ | A_{K_{i-1}} = \alpha)$.

We thus have
$$F_\alpha(y) = \sum_{\beta \in \mathcal{A}} F_{\alpha \beta}(y) q_{\alpha \beta}=\sum_{\beta \in \mathcal{A}^-} F_{\alpha \beta}(y) q_{\alpha \beta}. $$

We also introduce the stopping time $\sigma^+:=\inf\{k\geqp 1:S_k>0\}$ with values in $\N \cup \{\infty\}$. Due to hypothesis $(\ref{Hyp:ScoreMoyNeg})$ we have $\P_\alpha(\sigma^+ < \infty) < 1$, for all $\alpha\in \mathcal{A}$.

For every $\alpha, \beta \in \mathcal{A}$ and $\xi > 0$, let
$$L_{\alpha\beta}(\xi):=\P_{\alpha}(S_{\sigma^+} \leqp \xi, \sigma^+<\infty, A_{\sigma^+}=\beta).$$
Note that $L_{\alpha\beta}(\xi) = 0$ for $\beta \in \mathcal{A} \setminus \mathcal{A}^+$.
We have $L_{\alpha\beta}(\infty) \leq \P_\alpha(\sigma^+ < \infty) < 1$, and hence
\begin{equation}\label{L}
\int_{0}^\infty dL_{\alpha\beta}(\xi) = 1 - L_{\alpha\beta}(\infty) > 0.
\end{equation}
Let us also denote
$$L_{\alpha}(\xi):=\sum_{\beta \in \mathcal{A}^+}L_{\alpha \beta}(\xi) = \P_{\alpha}(S_{\sigma^+} \leqp \xi, \sigma^+<\infty)$$
the conditional \textit{cdf} of the first positive partial sum when it exists, given that the Markov chain starts in state $\alpha$, and
$$L_{\alpha}(\infty) := \lim_{\xi \to \infty} L_{\alpha}(\xi) = \P_{\alpha}(\sigma^+<\infty).$$

For any $\theta \in \mathbb{R}$ we introduce the following matrix 
$$\Phi(\theta):=\left(p_{\alpha\beta}\cdot\exp(\theta f(\beta))\right)_{\alpha,\beta \in \mathcal{A}}.$$ 
Since the transition matrix $\bf{P}$ was supposed to be positive, by the Perron-Frobenius Theorem, the spectral radius $\rho(\theta) > 0$ of the matrix $\Phi(\theta)$ coincides with its do\-mi\-nant eigenvalue, for which there exists a unique positive right eigen vector $u(\theta)=(u_i(\theta))_{1\leq i \leq r}$ (seen as a column vector) normalized so that $\sum_{i=1}^r u_i(\theta)=1$. Moreover, $\theta \mapsto \rho(\theta)$ is differentiable and strictly log convex (see \cite{Lancaster,DKa91b,KOs87}). In Lemma \ref{lem:rho'} we prove that $\rho'(0) = \E[f(A)]$, hence $\rho'(0) < 0$ by Hypothesis $(\ref{Hyp:ScoreMoyNeg})$. Together with the strict log convexity of $\rho$ and the fact that $\rho(0)= 1$, %and $\rho(\theta) \to \infty$ when $\theta \to \infty$ (see \cite{KDe92})
this implies that there exists a unique $\theta^* > 0$ such that $\rho(\theta^*)=1$ (see \cite{DKa91b} for more details).
%As in \cite{KDe92} (Lemma 4.1), Hypotheses $(\ref{Hyp:ScoreMoyNeg})$ and $(\ref{Hyp:ProbaScorePos})$ assure
 %(see also \cite{DKa91b,KOs87} for $\rho(\theta)$ strictly convex)
  \\

%\noindent \textbf{TODO:} Clarify when and if we use Hypothesis 2. Find a better reference for $\rho(\theta) \to \infty$ when $\theta \to \infty$, maybe check Lancaster. Check also if the strict convexity is in Lancaster.

\subsection{Main results. Improvements on the distribution of the local score}
%%%%%%%%%%%%%%%%%%%%%%%%%

Let $\alpha \in \mathcal{A}$. We start by giving a result which allows to compute recursively the \textit{cdf} of the maximal non-negative partial sum $S^+$.
We denote by $F_{S^+,\alpha}$ the \textit{cdf} of $S^+$ conditionally on starting in state $\alpha$:
$$F_{S^+,\alpha}(\ell d) := \P_{\alpha}(S^+\leq\ell d), \ \ \forall \ell \in \N $$
and for every $k \in \mathbb{N}\setminus \{0\}$ and $\beta \in \mathcal{A}$:
%$$F_{S^+,\alpha}^{(\ell)}:= F_{S^+,\alpha}(\ell d) = \P_{\alpha}(S^+\leq\ell d)$$ 
%and 
$$L^{(k)}_{\alpha \beta}:=\P_\alpha(S_{\sigma^+} = k d, \sigma^+ < \infty,  A_{\sigma^+} = \beta).$$ 
Note that $L^{(k)}_{\alpha \beta}=0$ for $\beta \in \mathcal{A} \setminus \mathcal{A}^+$ and
$L_{\alpha}(\infty) = \sum_{\beta \in \mathcal{A}^+} \sum_{k=1}^{\infty} L^{(k)}_{\alpha \beta}.$

The following result gives a recurrence relation for the double sequence $(F_{S^+,\alpha}{(\ell d)})_{\alpha,\ell}$.
  
\begin{thm}[Exact result for the distribution of $S^+$]\label{res:exactS+}
For all $\alpha\in\mathcal{A}$ and $\ell \geq 1$:
\begin{align*}
F_{S^+,\alpha}(0) &=\P_\alpha(\sigma^+ = \infty) = 1 - L_{\alpha}(\infty),\\
F_{S^+,\alpha}{(\ell d)} &=1 - L_{\alpha}(\infty) + \sum_{\beta \in \mathcal{A}^+} \sum_{k=1}^{\ell} L^{(k)}_{\alpha \beta} \ F_{S^+,\beta}{((\ell-k)d)}.
\end{align*}
\end{thm}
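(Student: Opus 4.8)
The plan is to prove both identities by conditioning on the first strictly ascending time $\sigma^+$ and exploiting the renewal structure it produces via the strong Markov property. The base case $\ell=0$ is immediate: since $S^+\geq 0$ always (the value $S_0=0$ is included in the defining maximum), the event $\{S^+\leq 0\}$ coincides with $\{S^+=0\}$, which is exactly the event that no partial sum ever becomes strictly positive, i.e. $\{\sigma^+=\infty\}$. Hence $F_{S^+,\alpha}(0)=\P_\alpha(\sigma^+=\infty)=1-L_\alpha(\infty)$. For the recursion I would fix $\ell\geq 1$ and split $\{S^+\leq \ell d\}$ according to whether $\sigma^+$ is finite. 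On $\{\sigma^+=\infty\}$ we have $S^+=0\leq \ell d$, contributing the full weight $\P_\alpha(\sigma^+=\infty)=1-L_\alpha(\infty)$; the real work is to analyse the event $\{S^+\leq \ell d,\ \sigma^+<\infty\}$.

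The key structural observation is a decomposition of $S^+$ at $\sigma^+$. On $\{\sigma^+<\infty\}$, every partial sum with index strictly below $\sigma^+$ is non-positive by definition of $\sigma^+$, while $S_{\sigma^+}>0$; consequently the maximum $\max_{k\geq 0}S_k$ is attained at some index $k\geq \sigma^+$, and writing $S_k=S_{\sigma^+}+(S_k-S_{\sigma^+})$ gives
$$S^+=S_{\sigma^+}+\max_{j\geq 0}\big(S_{\sigma^+ +j}-S_{\sigma^+}\big).$$
I would denote the second summand by $\tilde S^+$. By the strong Markov property applied at the stopping time $\sigma^+$, conditionally on $\{\sigma^+<\infty,\ A_{\sigma^+}=\beta\}$ the shifted increment process $(S_{\sigma^+ +j}-S_{\sigma^+})_{j\geq 0}$ is a fresh copy of $(S_j)_{j\geq 0}$ started from state $\beta$; hence $\tilde S^+$ is distributed as $S^+$ under $\P_\beta$ and is independent of the pair $(\sigma^+,S_{\sigma^+})$ given $A_{\sigma^+}=\beta$.

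Putting these together, I would condition on the value $S_{\sigma^+}=kd$ and the endpoint $A_{\sigma^+}=\beta\in\mathcal{A}^+$, whose joint law on $\{\sigma^+<\infty\}$ is exactly $L^{(k)}_{\alpha\beta}$. The factorisation then yields
$$\P_\alpha(S^+\leq \ell d,\ \sigma^+<\infty)=\sum_{\beta\in\mathcal{A}^+}\sum_{k\geq 1}L^{(k)}_{\alpha\beta}\,\P_\beta\big(S^+\leq (\ell-k)d\big).$$
Because $S^+\geq 0$, the factor $\P_\beta(S^+\leq (\ell-k)d)=F_{S^+,\beta}((\ell-k)d)$ vanishes whenever $k>\ell$, so the inner sum truncates at $k=\ell$; adding back the $\{\sigma^+=\infty\}$ contribution gives the announced recurrence. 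I expect the only delicate point to be the rigorous justification of the decomposition $S^+=S_{\sigma^+}+\tilde S^+$ on $\{\sigma^+<\infty\}$ together with the clean application of the strong Markov property at $\sigma^+$ (in particular the conditional independence of $\tilde S^+$ from $(\sigma^+,S_{\sigma^+})$ given $A_{\sigma^+}$); everything else is bookkeeping, notably the observation that summing $F_{S^+,\beta}$ against $L^{(k)}_{\alpha\beta}$ over all $k\geq 1$ is the same as summing over $1\leq k\leq \ell$ thanks to the non-negativity of $S^+$.
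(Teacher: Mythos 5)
Your proof is correct and follows essentially the same route as the paper's: split on $\{\sigma^+=\infty\}$ versus $\{\sigma^+<\infty\}$, decompose the latter over the values $(S_{\sigma^+},A_{\sigma^+})=(kd,\beta)$, and apply the strong Markov property at $\sigma^+$ to identify the conditional law of $S^+-S_{\sigma^+}$ with that of $S^+$ under $\P_\beta$. The only cosmetic difference is that the paper truncates the sum at $k=\ell$ from the outset (since $S_{\sigma^+}\leq S^+$ forces $k\leq\ell$ on the event $\{S^+\leq\ell d\}$), whereas you truncate at the end via $F_{S^+,\beta}((\ell-k)d)=0$ for $k>\ell$; both are valid.
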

The proof will be given in Section \ref{Sec:Proofs}.

%It does not seem obvious how to deduce from Theorem \ref{res:exactS+} the asymptotic behavior of $S^+$, needed to prove our result in Theorem \ref{res:Q1}. 
In Theorem \ref{res:asymptS+} we obtain the asymptotic behavior of $S^+$ using Theorem \ref{res:exactS+} and ideas inspired from \cite{KDe92} %(Lemma 4.3)
and adapted to our framework (see also the discussion in Remark \ref{KD}). 
Before stating this result, we need to introduce few more notations.

For every $\alpha, \beta \in \mathcal{A}$ and $\ell \in \mathbb{N}$ we denote
$$G_{\alpha\beta}^{(\ell)} :=
\frac{u_{\beta}(\theta^*)}{u_{\alpha}(\theta^*)} e^{\theta^* \ell d}L_{\alpha\beta}^{(\ell)},\qquad
G_{\alpha\beta}{(\ell)} := \sum_{k=0}^{\ell} G_{\alpha\beta}^{(k)},\qquad
G_{\alpha\beta}{(\infty)} := \sum_{k=0}^{\infty} G_{\alpha\beta}^{(k)}.
$$
The matrix ${\bf G(\infty)}:=(G_{\alpha\beta}(\infty))_{\alpha, \beta}$ is stochastic, using Lemma \ref{EspeUsigma}; the subset $\mathcal{A}^+$ is a recurrent class, whereas the states in $\mathcal{A}\setminus \mathcal{A}^+$ are transient. The restriction of ${\bf G(\infty)}$ to $\mathcal{A}^+$ is stochastic and irreducible; let us denote $\tilde{w} > 0$ the corresponding stationary frequency vector. Define $w=(w_{\alpha})_{\alpha\in\mathcal{A}}$, with $w_{\alpha}=\tilde{w}_{\alpha} > 0$ for $\alpha\in\mathcal{A}^+$ and $w_{\alpha}=0$ for $\alpha \in \mathcal{A}\setminus \mathcal{A}^+$. The vector $w$ is invariant for $\bf G(\infty)$, i.e. $w{\bf G(\infty)}=w$.

\begin{rem}
Note that in Karlin and Dembo's Markovian model of \cite{KDe92} the matrix ${\bf G(\infty)}$ is positive, hence irreducible, thanks to their random scoring function and to their hypotheses recalled in Remark \ref{KD}.
\end{rem}

\begin{rem}
Note that the coefficients $L_{\alpha\beta}^{(k)}$ can be computed recursively (see Subsection \ref{Subsec:L}). In Subsection \ref{Subsec:FS+} we present in detail a recursive procedure for computing the \textit{cdf} $F_{S^+,\alpha}$, based on Theorem \ref{res:exactS+}. Note also that, for every $\alpha, \beta \in \mathcal{A}$, there are a finite number of $L_{\alpha\beta}^{(k)}$ different from zero. Therefore, there are a finite number of non-null terms in the sum defining $G_{\alpha \beta}(\infty)$.
\end{rem}

\begin{thm}[Asymptotic distribution of $S^+$]\label{res:asymptS+}
For every $\alpha\in {\cal A}$ we have
\begin{equation}\label{eq:cinf}
\lim_{k \rightarrow +\infty} \frac{e^{\theta^*{kd}}\P_{\alpha}(S^+>kd)}{u_{\alpha}(\theta^*)} = \frac{d}{c} \cdot \sum_{\gamma \in \mathcal{A}^+} \frac{w_{\gamma}}{u_{\gamma}(\theta^*)}\sum_{\ell \geq 0} (L_{\gamma}(\infty)-L_{\gamma}(\ell d)) e^{\theta^*\ell d}:= c(\infty),
\end{equation}
where
$w=(w_{\alpha})_{\alpha\in {\cal A}}$ is the stationary frequency vector of the matrix $\bf G(\infty)$   
and 
$$c:=\sum_{\gamma,\beta \in \mathcal{A}^+}  \frac{w_{\gamma}}{u_{\gamma}(\theta^*)}u_{\beta}(\theta^*) \sum_{\ell \geq 0} \ell d \cdot e^{\theta^*{\ell d}} \ L^{(\ell)}_{\gamma \beta}.$$
\end{thm}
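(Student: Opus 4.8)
The plan is to convert the exact recurrence of Theorem \ref{res:exactS+} into a defective Markov renewal equation for the tail of $S^+$, remove the defect by an exponential change of measure anchored at $\theta^*$, and read off the limit from the key renewal theorem. First I would pass to the tail $\bar F_\alpha(\ell):=\P_\alpha(S^+>\ell d)=1-F_{S^+,\alpha}(\ell d)$. Subtracting the recurrence of Theorem \ref{res:exactS+} from $1$ and using $L_\alpha(\ell d)=\sum_{\beta\in\mathcal{A}^+}\sum_{k=1}^\ell L_{\alpha\beta}^{(k)}$ gives, for all $\alpha\in\mathcal{A}$ and $\ell\geq 0$,
\begin{equation*}
\bar F_\alpha(\ell) = \big(L_\alpha(\infty)-L_\alpha(\ell d)\big) + \sum_{\beta\in\mathcal{A}^+}\sum_{k=1}^\ell L_{\alpha\beta}^{(k)}\,\bar F_\beta(\ell-k),
\end{equation*}
a Markov renewal equation whose kernel $(L_{\alpha\beta}^{(k)})$ is defective, its row sums being $L_\alpha(\infty)=\P_\alpha(\sigma^+<\infty)<1$; the forcing term equals $L_\alpha(\infty)-L_\alpha(\ell d)=\P_\alpha(S_{\sigma^+}>\ell d,\ \sigma^+<\infty)$.

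Next I would conjugate by the dominant eigenfunction. Setting $v_\alpha(\ell):=e^{\theta^*\ell d}\bar F_\alpha(\ell)/u_\alpha(\theta^*)$, multiplying the previous display by $e^{\theta^*\ell d}/u_\alpha(\theta^*)$ and factoring $e^{\theta^*\ell d}=e^{\theta^* kd}e^{\theta^*(\ell-k)d}$, the definition of $G_{\alpha\beta}^{(k)}$ turns it into the \emph{proper} Markov renewal equation
\begin{equation*}
v_\alpha(\ell) = \tilde h_\alpha(\ell) + \sum_{\beta\in\mathcal{A}^+}\sum_{k=1}^\ell G_{\alpha\beta}^{(k)}\,v_\beta(\ell-k),\qquad \tilde h_\alpha(\ell):=\frac{e^{\theta^*\ell d}}{u_\alpha(\theta^*)}\big(L_\alpha(\infty)-L_\alpha(\ell d)\big),
\end{equation*}
whose transition kernel $\mathbf{G}(\infty)$ is stochastic by Lemma \ref{EspeUsigma}: the martingale identity $\E_\alpha[u_{A_{\sigma^+}}(\theta^*)e^{\theta^* S_{\sigma^+}};\sigma^+<\infty]=u_\alpha(\theta^*)$ makes the row sums equal to $1$, absorbing the defect into the forcing term. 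Moreover the first ascending ladder height is bounded: since $S_{\sigma^+-1}\leq 0$ and $f$ is bounded on the finite set $\mathcal{A}$, one has $0<S_{\sigma^+}\leq\max_\gamma f(\gamma)$, so only finitely many $L_{\alpha\beta}^{(k)}$ are non-zero. Hence $\tilde h_\alpha$ has finite support (so it is trivially summable), and the mean lattice jump $\bar\nu:=\sum_{\gamma\in\mathcal{A}^+}w_\gamma\sum_\beta\sum_k k\,G_{\gamma\beta}^{(k)}$ is a finite sum; inserting $G_{\gamma\beta}^{(k)}=\frac{u_\beta(\theta^*)}{u_\gamma(\theta^*)}e^{\theta^* kd}L_{\gamma\beta}^{(k)}$ identifies $\bar\nu=c/d$.

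Finally I would apply the key renewal theorem for an aperiodic, positive-recurrent lattice Markov renewal process with embedded chain $\mathbf{G}(\infty)$ and stationary vector $w$, obtaining, independently of the starting state $\alpha$,
\begin{equation*}
\lim_{\ell\to\infty} v_\alpha(\ell) = \frac{1}{\bar\nu}\sum_{\gamma\in\mathcal{A}^+} w_\gamma\sum_{m\geq 0}\tilde h_\gamma(m) = \frac{d}{c}\sum_{\gamma\in\mathcal{A}^+}\frac{w_\gamma}{u_\gamma(\theta^*)}\sum_{m\geq 0}\big(L_\gamma(\infty)-L_\gamma(md)\big)e^{\theta^*md},
\end{equation*}
which is exactly $c(\infty)$; since $v_\alpha(\ell)=e^{\theta^*\ell d}\P_\alpha(S^+>\ell d)/u_\alpha(\theta^*)$, this is the claim.

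The hard part will be the clean application of the Markov renewal theorem, not the algebra. Three points need care: (i) $\mathbf{G}(\infty)$ is irreducible only on the recurrent class $\mathcal{A}^+$, the states of $\mathcal{A}\setminus\mathcal{A}^+$ being transient, so I must argue that a start in a transient state contributes only finitely many steps before the process reaches $\mathcal{A}^+$ and hence leaves the limit unchanged — this is what lets the result hold for every $\alpha\in\mathcal{A}$ and not only for $\alpha\in\mathcal{A}^+$; (ii) aperiodicity of the renewal, i.e.\ that the span of $S_{\sigma^+}$ is exactly $d$ so that the limit may legitimately be taken along $\{kd\}_k$, which should follow from the choice of $d$ as the lattice step together with $(\ref{Hyp:ProbaScorePos})$; and (iii) the normalisation $0<\bar\nu<\infty$ (equivalently $0<c<\infty$), again guaranteed by the boundedness of $S_{\sigma^+}$ and by the strict positivity of $w$ and of $u(\theta^*)$.
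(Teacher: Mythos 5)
Your proposal is correct and follows essentially the same route as the paper: the paper likewise rewrites the recurrence of Theorem \ref{res:exactS+} as a renewal system for $b_\alpha(\ell d)=e^{\theta^*\ell d}(1-F_{S^+,\alpha}(\ell d))/u_\alpha(\theta^*)$ with kernel $G^{(k)}_{\alpha\beta}$ made stochastic via Lemma \ref{EspeUsigma}, applies the Markov renewal theorem (Theorem 2.2 of Athreya and Murthy) on the recurrent class $\mathcal{A}^+$, and then extends to $\alpha\notin\mathcal{A}^+$ by conditioning on the first ascending ladder step and invoking Lemma \ref{EspeUsigma} again. Your three points of care (transient states, lattice span, finiteness and positivity of the normalisation) are exactly the ones the paper addresses.
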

The proof is deferred to Section \ref{Sec:Proofs}.

\begin{rem}\label{rem:cinf}
Note that there are a finite number of non-null terms in the above sums over $\ell$.
We also have the following alternative expression for $c(\infty)$:
$$
c(\infty) = \frac{d}{c(e^{\theta^*d}-1)} \cdot \sum_{\gamma \in \mathcal{A}^+} \frac{w_{\gamma}}{u_{\gamma}(\theta^*)} \left\{
\E_\gamma\left[e^{\theta^* S_{\sigma^+}}; \sigma^+ < \infty\right] - L_\gamma(\infty)
\right\}.
$$
Indeed, by the summation by parts formula
$$
\sum_{\ell=m}^k f_\ell (g_{\ell+1}-g_\ell) = f_{k+1} g_{k+1} - f_m g_m -  \sum_{\ell=m}^k  (f_{\ell+1}-f_\ell)g_{\ell+1},
$$
we obtain
\begin{align*}
& \sum_{\ell = 0}^{\infty} (L_{\gamma}(\infty)-L_{\gamma}(\ell d)) e^{\theta^*\ell d} 
= \frac{1}{e^{\theta^*d}-1} \sum_{\ell = 0}^{\infty} (L_{\gamma}(\infty)-L_{\gamma}(\ell d)) \left(e^{\theta^*(\ell+1) d} - e^{\theta^*\ell d} \right)\\
& = \frac{1}{e^{\theta^*d}-1}\\
&\quad\times \left\{ \lim_{k \to \infty} (L_{\gamma}(\infty)-L_{\gamma}(k d)) e^{\theta^*k d} - L_{\gamma}(\infty) - \sum_{\ell = 0}^{\infty} (L_{\gamma}(\ell d)-L_{\gamma}((\ell+1) d)) e^{\theta^*(\ell+1) d}  \right\}\\
& = \frac{1}{e^{\theta^*d}-1} \left\{  - L_{\gamma}(\infty) + \sum_{\ell = 0}^{\infty} e^{\theta^*(\ell+1) d} \ \P_{\gamma}(S_{\sigma^+} = (\ell+1) d, \ \sigma^+<\infty) \right\}\\
& = \frac{1}{e^{\theta^*d}-1}  \left\{
\E_\gamma\left[e^{\theta^* S_{\sigma^+}}; \sigma^+ < \infty\right] - L_\gamma(\infty)
\right\}.
\end{align*}
\end{rem}

Before stating the next results, let us denote for every integer $\ell < 0$ and $\alpha, \beta \in \mathcal{A}$, 
$$Q_{\alpha\beta}^{(\ell)}:=\P_{\alpha}(S_{\sigma^-}=\ell d,A_{\sigma^-}=\beta).$$
Note that $Q_{\alpha\beta}^{(\ell)} = 0$ for $\beta \in  \mathcal{A} \setminus \mathcal{A}^-$.
In Section \ref{sec:comp} we give a recursive computational method for obtaining these quantities.

Using Theorem \ref{res:asymptS+} we obtain the following
\begin{thm}[Asymptotic distribution of $Q_1$]\label{res:Q1}
We have the following asymptotic result on the distribution of the %maximal segmental score 
height of the first non-negative excursion: for every $\alpha \in \mathcal{A}$ we have
\begin{equation}\label{eq:Q1}
\P_{\alpha}(Q_1>kd)\underset{k\rightarrow\infty}{\sim}\P_{\alpha}(S^+>kd)-\sum_{\ell<0}\sum_{\beta \in \mathcal{A}^-}\P_{\beta}\left (S^+>(k-\ell)d\right )\cdot Q_{\alpha\beta}^{(\ell)}.
\end{equation}
\end{thm}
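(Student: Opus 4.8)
The plan is to relate the height $Q_1$ of the first non-negative excursion to the maximal non-negative partial sum $S^+$ by decomposing according to the first decreasing ladder time $K_1 = \sigma^-$. The key observation is that $S^+$, the supremum over \emph{all} partial sums, can be split into the contribution from the first excursion (which is exactly $Q_1$, since $Q_1 = \max_{0 \leq k < K_1} S_k$) and the contribution from everything that happens after $K_1$. After time $\sigma^-$, the chain restarts from state $A_{\sigma^-} = \beta \in \mathcal{A}^-$ by the strong Markov property, but now all subsequent partial sums are measured relative to the level $S_{\sigma^-} = \ell d < 0$. Thus the portion of $S^+$ coming from the future excursions is distributed, conditionally on $\{A_{\sigma^-}=\beta,\ S_{\sigma^-}=\ell d\}$, as $\ell d + (S^+)'$ where $(S^+)'$ is an independent copy of $S^+$ started in $\beta$.

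\emph{First} I would write the exact decomposition $S^+ = \max(Q_1,\ S_{\sigma^-} + \tilde S^+)$, where $\tilde S^+$ denotes the maximal non-negative partial sum of the post-$\sigma^-$ sequence started afresh in $A_{\sigma^-}$. Conditioning on the value of the pair $(S_{\sigma^-}, A_{\sigma^-})$, whose joint law is encoded by the quantities $Q_{\alpha\beta}^{(\ell)} = \P_\alpha(S_{\sigma^-}=\ell d, A_{\sigma^-}=\beta)$ introduced just before the theorem, the independence from the strong Markov property yields an exact identity for the tail event. Writing out $\P_\alpha(S^+ > kd)$ using the union $\{Q_1 > kd\} \cup \{S_{\sigma^-}+\tilde S^+ > kd\}$ and inclusion–exclusion, the overlap term involves $\P_\alpha(Q_1 > kd,\ S_{\sigma^-}+\tilde S^+ > kd)$.

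\emph{The main obstacle} will be controlling this overlap term and showing it is asymptotically negligible relative to the two leading tails, so that the inclusion–exclusion collapses to the clean asymptotic difference in \eqref{eq:Q1}. Here I would invoke the exponential decay supplied by Theorem \ref{res:asymptS+}: each of $\P_\alpha(S^+>kd)$ and $\P_\beta(S^+>(k-\ell)d)$ decays like $e^{-\theta^* kd}$ with an explicit constant, whereas the joint probability of both $Q_1$ and the shifted future sum being simultaneously large should decay at the faster rate $e^{-2\theta^* kd}$ (roughly, it requires two independent rare events), making it $o(e^{-\theta^* kd})$. Granting this, the identity $\P_\alpha(S^+>kd) = \P_\alpha(Q_1>kd) + \sum_{\ell<0}\sum_{\beta\in\mathcal{A}^-} \P_\beta(\tilde S^+ > (k-\ell)d)\,Q_{\alpha\beta}^{(\ell)} - o(e^{-\theta^* kd})$ rearranges directly into \eqref{eq:Q1}, after noting the sums over $\ell$ and $\beta$ are finite by hypothesis \eqref{Hyp:ScoreMoyNeg} (integrability of $\sigma^-$ bounds the overshoot) and that $\tilde S^+$ started in $\beta$ has the same law as $S^+$ under $\P_\beta$. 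A subtlety I would double-check is the edge case where $Q_1$ and the future contribution are not literally independent given only $A_{\sigma^-}$ but require conditioning on the full stopped path; however, since $Q_1$ is measurable with respect to the pre-$\sigma^-$ trajectory and $\tilde S^+$ depends only on the post-$\sigma^-$ increments, the strong Markov property gives conditional independence given $A_{\sigma^-}$, which is exactly what the decomposition needs.
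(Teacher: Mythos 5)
Your proposal is correct and follows essentially the same route as the paper: the paper likewise writes $\P_\alpha(S^+>kd)=\P_\alpha(Q_1>kd)+\P_\alpha(S^+>kd,\,Q_1\leq kd)$, decomposes the second term over $(S_{\sigma^-},A_{\sigma^-})$ via the strong Markov property, and identifies the same overlap term $\sum_{\ell<0}\sum_{\beta\in\mathcal{A}^-}\P_\beta(S^+>(k-\ell)d)\,\P_\alpha(Q_1>kd,\,S_{\sigma^-}=\ell d,\,A_{\sigma^-}=\beta)$. The only refinement to make is in the last step: rather than arguing the overlap is $o(e^{-\theta^*kd})$ (which would not by itself give the stated $\sim$ if the two leading tails nearly cancelled), use your own factorization to bound it by $\sup_{\beta,\ell}\P_\beta(S^+>(k-\ell)d)\cdot\P_\alpha(Q_1>kd)=o\bigl(\P_\alpha(Q_1>kd)\bigr)$, which is exactly how the paper concludes.
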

The proof will be given in Section \ref{Sec:Proofs}.

%\bigskip

Using now Theorems \ref{res:asymptS+} and \ref{res:Q1} we finally obtain the following result on the asymptotic distribution of the local score $M_n$ for a sequence of length $n$.
\begin{thm}[Asymptotic distribution of the local score $M_n$]\label{res:Mn}
For every $\alpha \in \mathcal{A}$:
\begin{align}\label{eq:Mn}
%&\underset{n\rightarrow\infty}{\lim}
\P_\alpha  &\left(M_n\leq \frac{\log (n)}{\theta^*}+x\right )
%=\underset{n\rightarrow\infty}{\lim}
 \underset{n\rightarrow\infty}{\sim} \exp\left \{-\frac{n}{A^*}\sum_{\beta \in \mathcal{A}^-} z_{\beta} \P_{\beta}\left (S^+> \left\lfloor\frac{\log(n)}{\theta^*}+x\right\rfloor\right )\right \} \nonumber \\
& \ \ \ \ \ \ \ \ \times\exp\left \{ \frac{n}{A^*}
\sum_{k < 0}\sum_{\gamma \in \mathcal{A}^-} \P_{\gamma}\left(S^+>\left\lfloor \frac{\log(n)}{\theta^*}+x \right \rfloor -kd\right) \cdot\sum_{\beta \in \mathcal{A}^-}z_{\beta}Q_{\beta\gamma}^{(k)}
\right \},
\end{align}
where $z=(z_{\alpha})_{\alpha\in {\cal A}}$ is the invariant probability measure of the matrix ${\bf Q}$ defined in Subsection \ref{subsec:notdef} and
$$A^* := \lim_{m\rightarrow +\infty} \frac{K_m}{m} =  \frac{1}{\E(f(A))}\sum_{\beta \in \mathcal{A}^-} z_{\beta}  \E_\beta(S_{\sigma^-})\mbox{ a.s.}$$
\end{thm}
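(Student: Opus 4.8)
The plan is to exploit the regenerative structure of $M_n$, writing it through the excursion heights and reducing the statement to the asymptotics of a Markov‑modulated maximum, which the earlier tail estimates then evaluate. I work with the excursions of the infinite chain, for which $(Q_i)_{i\geq 1}$ are defined for all $i$. First I would discard the incomplete final excursion: since $\{M_n\leq y\}=\{\max_{1\leq i\leq m(n)}Q_i\leq y\}\cap\{Q^*\leq y\}$, we have $0\leq \P_\alpha(\max_{1\leq i\leq m(n)}Q_i\leq y)-\P_\alpha(M_n\leq y)\leq \P_\alpha(Q^*>y)$, and a length‑biasing argument based on $\E[K_1]<\infty$ (Lemma \ref{lem:Esp_K1}) together with $\E_\beta[K_1;Q_1>y]\to 0$ shows $\P_\alpha(Q^*>y_n)\to 0$ for $y_n:=\lfloor \log(n)/\theta^*+x\rfloor\to\infty$. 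Next I would remove the randomness of the excursion count: the a.s. convergence $K_m/m\to A^*$ inverts to $m(n)/n\to 1/A^*$ a.s., so $\P_\alpha(m(n)\notin I_n)\to 0$ with $I_n:=[(1-\varepsilon)n/A^*,(1+\varepsilon)n/A^*]$. Because $N\mapsto\max_{1\leq i\leq N}Q_i$ is nondecreasing, on $\{m(n)\in I_n\}$ the maximum over $m(n)$ excursions is sandwiched between the maxima over $N_{\pm}:=\lfloor(1\pm\varepsilon)n/A^*\rfloor$, reducing everything to $\Psi_N(y):=\P_\alpha(\max_{1\leq i\leq N}Q_i\leq y)$ for deterministic $N$, up to letting $\varepsilon\to 0$ at the end.

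The core step is an exact transfer‑matrix identity for $\Psi_N$. By the strong Markov property the heights $(Q_i)$ are, conditionally on the embedded chain $(A_{K_i})$, independent with the law of $Q_i$ governed by $(A_{K_{i-1}},A_{K_i})$; writing $\hat{Q}_{\alpha\beta}(y):=F_{\alpha\beta}(y)\,q_{\alpha\beta}$ (so that row sums equal $F_\alpha(y)=\P_\alpha(Q_1\leq y)$ and $\hat{Q}_{\alpha\beta}(y)=0$ for $\beta\in\mathcal{A}\setminus\mathcal{A}^-$), this gives $\Psi_N(y)=\big[\hat{Q}(y)^N\mathbf{1}\big]_\alpha$. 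As $y\to\infty$ the substochastic matrix $\hat{Q}(y)$ converges to $\mathbf{Q}$, whose restriction $\tilde{\mathbf{Q}}$ to $\mathcal{A}^-$ is irreducible stochastic with left eigenvector $\tilde{z}$ and right eigenvector $\mathbf{1}$. First‑order Perron--Frobenius perturbation theory then yields, for the dominant eigenvalue $\lambda(y)$ of the restriction of $\hat{Q}(y)$ to $\mathcal{A}^-$,
\begin{equation*}
\lambda(y)=1-\bar{p}(y)+O\big(\bar{p}(y)^2\big),\qquad \bar{p}(y):=\sum_{\beta\in\mathcal{A}^-} z_\beta\,\P_\beta(Q_1>y),
\end{equation*}
the first‑order coefficient being $\tilde{z}(\hat{Q}(y)-\tilde{\mathbf{Q}})\mathbf{1}=-\bar{p}(y)$, while the eigenvectors converge to $\tilde{z}$ and $\mathbf{1}$. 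After the single initial transition from the (possibly non‑negative) starting state $\alpha$ into $\mathcal{A}^-$, and since the subdominant eigenvalues of $\hat{Q}(y)$ stay bounded away from $1$, the spectral decomposition gives $\Psi_N(y)=c_\alpha(y)\,\lambda(y)^N(1+o(1))$ with $c_\alpha(y)\to 1$, uniformly in the regime $y\to\infty$, $N\to\infty$ with $N\bar{p}(y)$ bounded.

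It then remains to insert the scaling. With $y=y_n$ and $N\sim n/A^*$, Theorems \ref{res:asymptS+} and \ref{res:Q1} give $\P_\beta(Q_1>y_n)=\Theta(e^{-\theta^* y_n})=\Theta(n^{-1}e^{-\theta^* x})$, so $\bar{p}(y_n)\to 0$ while $N\bar{p}(y_n)$ stays bounded; hence the exponent error is $N\cdot O(\bar{p}(y_n)^2)=O(\bar{p}(y_n))\to 0$ and $\log\Psi_N(y_n)=-N\bar{p}(y_n)+o(1)$. Combining this with the two reductions of the first paragraph and letting $\varepsilon\to 0$ yields $\P_\alpha(M_n\leq y_n)\sim\exp\{-\tfrac{n}{A^*}\bar{p}(y_n)\}$. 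Finally I would substitute the two‑term tail of Theorem \ref{res:Q1} into $\bar{p}(y_n)=\sum_\beta z_\beta\P_\beta(Q_1>y_n)$; after exchanging the finite sums and relabelling indices (so that $\sum_\alpha z_\alpha Q_{\alpha\beta}^{(\ell)}$ becomes $\sum_\beta z_\beta Q_{\beta\gamma}^{(k)}$), the single exponent $-\tfrac{n}{A^*}\bar{p}(y_n)$ splits into exactly the product of the two exponentials displayed in (\ref{eq:Mn}), completing the argument.

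The step I expect to be the main obstacle is the uniform Perron--Frobenius perturbation controlling $\Psi_N(y)=c_\alpha(y)\lambda(y)^N(1+o(1))$: because $N$ is of order $n$, any deviation of $\log\lambda(y)$ from $-\bar{p}(y)$ beyond order $\bar{p}(y)^2$, or any failure of the prefactor and eigenvectors to converge uniformly, would survive multiplication by $N$ and corrupt the limiting exponent. Establishing the first‑order expansion of $\lambda(y)$ with a genuinely quadratic remainder, and verifying that the subdominant spectrum and the non‑stationary start contribute negligibly after $N$ steps, is where the irreducibility of $\tilde{\mathbf{Q}}$ and the analyticity of the simple Perron eigenvalue are used. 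A secondary difficulty is decoupling the random count $m(n)$ from the correlated heights $(Q_i)$, which the monotone sandwich above handles without invoking any independence.
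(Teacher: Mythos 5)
Your argument is correct in its main thrust but follows a genuinely different route from the paper's in the central step. You encode the conditional independence of the excursion heights in the substochastic transfer matrix $\hat{Q}(y)=\left(F_{\alpha\beta}(y)q_{\alpha\beta}\right)_{\alpha,\beta}$, write $\P_\alpha(\max_{i\le N}Q_i\le y)=[\hat{Q}(y)^N\mathbf{1}]_\alpha$, and extract the exponent from a first-order Perron--Frobenius expansion $\lambda(y)=1-\bar p(y)+O(\bar p(y)^2)$ of the dominant eigenvalue. The paper instead takes the logarithm of the product $\prod_i F_{A_{K_{i-1}}A_{K_i}}(y)$, lets the ergodic theorem for the embedded chain $(A_{K_i})$ send the empirical transition frequencies $\psi_{\beta\gamma}(m)$ to $z_\beta q_{\beta\gamma}$, and passes to the limit using only the uniform boundedness of $m\left[1-F_{\beta\gamma}(\log m/\theta^*+x)\right]$ --- no spectral theory at all. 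The two exponents coincide, since $\tilde z(\hat Q(y)-{\bf \tilde Q})\mathbf 1=-\sum_\beta z_\beta(1-F_\beta(y))=-\bar p(y)$. Your route obliges you to check, as you yourself flag, that the quadratic remainder survives multiplication by $N\sim n/A^*$ (it does, since $N\bar p(y_n)^2=(N\bar p)\cdot\bar p\to0$) and that the subdominant spectrum of $\hat Q(y)$ restricted to $\mathcal{A}^-$ stays away from the unit circle; the latter needs aperiodicity of ${\bf \tilde Q}$, which here follows from $q_{\alpha\beta}\ge p_{\alpha\beta}>0$ for $\alpha,\beta\in\mathcal{A}^-$ and is worth making explicit. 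In exchange, your $\varepsilon$-sandwich on $m(n)$ and your explicit treatment of the straddling excursion $Q^*$ are more careful than the paper's rather terse replacement of $M_n$ by $M_{K_{\lfloor n/A^*\rfloor}}$ via $M_{K_{m(n)}}\le M_n\le M_{K_{m(n)+1}}$.

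One piece of the statement is absent from your proposal: the closed form $A^*=\frac{1}{\E[f(A)]}\sum_{\beta\in\mathcal{A}^-}z_\beta\E_\beta(S_{\sigma^-})$. Lemma \ref{Km} only yields $A^*=\sum_\beta z_\beta\E_\beta(K_1)$, and the identity $\E[f(A)]\cdot\sum_\beta z_\beta\E_\beta(\sigma^-)=\sum_\beta z_\beta\E_\beta(S_{\sigma^-})$ is a Markov-chain analogue of Wald's identity that genuinely requires proof. The paper obtains it by applying the optional sampling theorem to the martingale $U_m(\theta)$ of Lemma \ref{UmMartingale} at the stopping time $\sigma^-$, differentiating the resulting identity in $\theta$ at $\theta=0$, summing against $z$ and using $z{\bf Q}=z$ together with $\rho'(0)=\E[f(A)]$ from Lemma \ref{lem:rho'}. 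You should supply this step (or an equivalent ergodic argument for $S_{K_m}/K_m$) to cover the full statement.
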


\begin{rem}
\begin{itemize}
\item Note that the asymptotic equivalent in Equation (\ref{eq:Mn}) does not depend on the initial state $\alpha$.
\item We recall, for comparison, the asymptotic result of  \cite{KDe92} (Equation (1.27)) for the distribution of $M_n$:
\begin{equation}\label{Res:AppxMnKDe}
\lim_{n\rightarrow +\infty}\P_\alpha \left(M_n\leq \frac{\log (n)}{\theta^*}+x\right )
=\exp\left ( -K^*\exp(-\theta^*)\right ),
\end{equation}
with %in the lattice case 
$K^*=v(\infty)\cdot c(\infty)$, where $c(\infty)$ given in Theorem \ref{res:asymptS+} is related to the defective distribution of the first positive partial sum $S_{\sigma^+}$ (see also Remark \ref{rem:cinf})
and $v(\infty)$ is related to the distribution of the first negative partial sum $S_{\sigma^-}$ (see Equations (5.1) and (5.2) of \cite{KDe92} for more details). A more explicit formula for $K^*$ is given in Subsection \ref{subsec:simplecase} for an application in a simple case. 
%$C^* = c(\infty) z  \left( I_r - e^{-\theta^*}  {\bf Q^{(-1)}}  \right)u(\theta^*)$,
 %$A^*$ defined in Theorem \ref{res:Mn}, $c(\infty)$ defined in Theorem \ref{res:asymptS+}, $I_r$ the $r \times r$ identity matrix and ${\bf Q^{(-1)} }:= (Q^{(-1)}_{\alpha \beta})_{\alpha,\beta \in \mathcal{A}}$.

\item Note that our asymptotic equivalent in Equation (\ref{eq:Mn}) keeps the dependence on $n$, whereas the approximation derived from Equation (\ref{Res:AppxMnKDe}) does not. 

\end{itemize}
\end{rem}

%%%%%%%%%%%%%%%%%%%%%%%%%%%%%%%%%%%%%%%%%%%%%%%

%%%%%%%%%%%%%%%%%%%%%%%%%%%%%%%%%%
\section{Proofs of the main results}\label{Sec:Proofs}
%%%%%%%%%%%%%%%%%%%%%%%%%%%%%%%%%%

\subsection{Proof of Theorem \ref{res:exactS+}}

We have
\begin{align*}
F_{S^+,\alpha}(\ell d)&= \P_\alpha(\sigma^+ = \infty) + \P_\alpha(S^+ \leq \ell d, \sigma^+ < \infty)\\
&=1 - L_{\alpha}(\infty) + \sum_{\beta \in \mathcal{A}^+} \sum_{k=1}^{\ell} \P_\alpha(S^+ \leq \ell d, \sigma^+ < \infty, S_{\sigma^+} = k d, A_{\sigma^+} = \beta)\\
&= 1 - L_{\alpha}(\infty) + \sum_{\beta \in \mathcal{A}^+} \sum_{k=1}^{\ell} L^{(k)}_{\alpha \beta} \ \P_\alpha(S^+ \leq \ell d \ | \sigma^+ < \infty, S_{\sigma^+} = k d, A_{\sigma^+} = \beta).
\end{align*}
The last probability can further be written
$$
\P_\alpha(S^+ - S_{\sigma^+} \leq (\ell-k)d \ | \sigma^+ < \infty, S_{\sigma^+} = k d, A_{\sigma^+} = \beta)
= \P_\beta(S^+ \leq (\ell-k)d),
$$
by the strong Markov property applied to the stopping time $\sigma^+$.
The stated result easily follows. $\hfill\square$

\subsection{Proof of Theorem \ref{res:asymptS+}}

We first prove some preliminary lemmas. 

\begin{lem}\label{Splus}
We have $\lim_{k \to \infty} \P_\alpha(S^+ > kd)= 0$ for every $\alpha \in \mathcal{A}$.
\end{lem}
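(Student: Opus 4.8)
The plan is to show that $S^+$ is a.s.\ finite, which immediately gives $\P_\alpha(S^+ > kd) \to 0$ by the dominated (or monotone) convergence of tail probabilities. The key structural fact I would exploit is the negative drift hypothesis (\ref{Hyp:ScoreMoyNeg}): since $\E[f(A)] < 0$ and the chain is stationary irreducible aperiodic, the strong law of large numbers for functionals of a finite irreducible Markov chain yields $S_k/k \to \E[f(A)] < 0$ a.s.\ as $k \to \infty$. Consequently $S_k \to -\infty$ a.s., and in particular the random sequence $(S_k)_{k \geq 0}$ attains only finitely many non-negative values, so its supremum $S^+ = \max\{0, S_k : k \geq 0\}$ is a.s.\ finite (the maximum of a finite collection of real numbers).

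Having established $\P_\alpha(S^+ < \infty) = 1$, I would conclude as follows. For each fixed $\alpha$, the events $\{S^+ > kd\}$ decrease to $\bigcap_{k} \{S^+ > kd\} = \{S^+ = \infty\}$ as $k \to \infty$. By continuity of probability from above,
\begin{equation*}
\lim_{k \to \infty} \P_\alpha(S^+ > kd) = \P_\alpha(S^+ = \infty) = 0,
\end{equation*}
which is exactly the claim. Since $\mathcal{A}$ is finite, this holds uniformly over the finitely many starting states $\alpha \in \mathcal{A}$.

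I expect the only genuine point requiring care to be the a.s.\ finiteness of $S^+$, i.e.\ justifying $S_k \to -\infty$. One clean way to package this, avoiding any subtlety about the SLLN for Markov chains, is to invoke the already-noted fact that $\sigma^-$ is an a.s.-finite stopping time and, more strongly, that the decreasing ladder epochs $K_i$ satisfy $\E[K_1] < \infty$ (Lemma~\ref{lem:Esp_K1}) and $m(n) \to \infty$ a.s. Each completed non-negative excursion contributes a strictly negative increment $S_{K_i} - S_{K_{i-1}} < 0$ to the ladder-height walk $(S_{K_i})_{i \geq 1}$, and since this embedded walk has i.i.d.-like (Markov-modulated) negative increments with finite mean, $S_{K_i} \to -\infty$ a.s.; because the running maximum of $(S_k)$ over the $i$-th excursion is exactly $Q_i + S_{K_{i-1}}$ with $Q_i$ a.s.\ finite, the global supremum over all $k$ is finite. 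Either route — the direct SLLN argument or the ladder-epoch decomposition — delivers the required a.s.\ finiteness, after which the tail limit is immediate.
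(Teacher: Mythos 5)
Your argument is correct, but it takes a genuinely different and much more elementary route than the paper's. The paper does not prove this lemma by showing $S^+<\infty$ a.s.; instead it rewrites the recursion of Theorem \ref{res:exactS+} as a Markov renewal system for $b_\alpha(\ell d)=e^{\theta^*\ell d}\bigl(1-F_{S^+,\alpha}(\ell d)\bigr)/u_\alpha(\theta^*)$ and applies the multidimensional renewal theorem of Athreya and Murthy to obtain the limit $c(\infty)$ of $b_\alpha(\ell d)$, i.e.\ the precise exponential asymptotics $\P_\alpha(S^+>kd)\sim c(\infty)u_\alpha(\theta^*)e^{-\theta^*kd}$ for $\alpha\in\mathcal{A}^+$, of which the lemma is a weak corollary; the case $\alpha\notin\mathcal{A}^+$ is then handled by conditioning on $(S_{\sigma^+},A_{\sigma^+})$. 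What the paper's route buys is the rate (its Step 1 is really the core of Theorem \ref{res:asymptS+}); what your route buys is self-containedness: the ergodic theorem gives $S_k/k\to\E[f(A)]<0$ a.s.\ from any initial state of a finite irreducible chain (the paper itself invokes exactly this inside the proof of Lemma \ref{EspeUsigma}), hence $S_k\to-\infty$ a.s., so $\sup_k S_k<\infty$ a.s.\ and the tail limit follows by continuity from above. Your version is arguably preferable for the lemma as stated, since the paper's renewal-theoretic step uses the stochasticity of ${\bf G(\infty)}$, which is justified via Lemma \ref{EspeUsigma}, whose proof in turn invokes Lemma \ref{Splus}; an elementary proof of the lemma removes this apparent circularity in the ordering of the arguments. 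One caveat: your alternative ladder-epoch sketch is incomplete as written, because $S_{K_i}\to-\infty$ together with $Q_i<\infty$ a.s.\ for each $i$ does not by itself yield $\sup_i\bigl(S_{K_{i-1}}+Q_i\bigr)<\infty$ without controlling the growth of $Q_i$ in $i$; but this is immaterial, as your primary argument already gives $\sup_k S_k<\infty$ directly.
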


\begin{proof} 
With $F_{S^+,\alpha}$ defined in Theorem \ref{res:exactS+}, we introduce for every $\alpha$ and $\ell \geq 0$:
$$
b_{\alpha}(\ell d) := \frac{1-F_{S^+,\alpha}(\ell d)}{u_{\alpha}(\theta^*)}e^{\theta^*\ell d}, \ \ a_{\alpha}(\ell d) := \frac{L_{\alpha}(\infty) - L_{\alpha}(\ell d)}{u_{\alpha}(\theta^*)}e^{\theta^*\ell d}.
$$
Theorem \ref{res:exactS+} allows to obtain the following renewal system for the family $(b_{\alpha})_{\alpha\in\mathcal{A}}$: 
$$\forall\ell > 0,\forall\alpha \in \mathcal{A},\quad
b_{\alpha}(\ell d)= a_{\alpha}(\ell d) + \sum_{\beta} \sum_{k = 0}^{\ell} b_{\beta}((\ell-k)d) G^{(k)}_{\alpha \beta}. 
$$
Since the restriction of ${\bf \tilde G(\infty)}$ of ${\bf G(\infty)}$ to $\mathcal{A}^+$ is stochastic, its spectral radius equals 1 and a corresponding right eigenvector is the vector having all components equal to 1; a left eigenvector is the stationary frequency vector $\tilde w > 0$.%, with $\tilde w_{\alpha}> 0$ for $\alpha\in\mathcal{A}^+$. 

%Notons ensuite que, pour tout $\alpha$, on a $\hat{a}_{\alpha}= \int_0^{\infty} a_{\alpha}(x)dx,$
%avec $a_{\alpha}(\cdot) $ apparaissant dans le syst�me d'�quations de renouvellement v�rifi� par la famille $(b_{\alpha}(\cdot))_{\alpha}$.

\noindent \textit{Step 1}: For every $\alpha \in \mathcal{A}^+$, a direct application of Theorem 2.2 of Athreya and Murthy \cite{AMu75} gives the formula in Equation \ref{eq:cinf} for the limit $c(\infty)$ of $b_{\alpha}(\ell d)$ when $\ell \to \infty$, which implies the stated result.\\

\noindent \textit{Step 2}: Consider now $\alpha \notin \mathcal{A}^+$. By Theorem \ref{res:exactS+} we have
$$
\P_\alpha(S^+ > \ell d) = L_{\alpha}(\infty) - \sum_{\beta \in \mathcal{A}^+} \sum_{k=1}^{\ell} L^{(k)}_{\alpha \beta} \ \left\{1-\P_\beta(S^+ > (\ell-k) d)\right\}.
$$
Since $\P_\beta(S^+ > (\ell-k) d) = 1$ for $k > \ell$ and $L_{\alpha}(\infty) = \sum_{\beta \in \mathcal{A}^+} \sum_{k=1}^{\infty} L^{(k)}_{\alpha \beta}$, we deduce
\begin{equation}\label{eq:S+}
\P_\alpha(S^+ > \ell d) = \sum_{\beta \in \mathcal{A}^+} \sum_{k=1}^{\infty} L^{(k)}_{\alpha \beta} \ \P_\beta(S^+ > (\ell-k) d).
\end{equation}
Note that for fixed $\alpha$ and $\beta$, there are a finite number of non-null terms in the above sum over $k$. Using the fact that for fixed $\beta \in \mathcal{A}^+$ and $k \geq 1$ we have $\P_\beta(S^+ > (\ell-k) d) \longrightarrow 0$ when $\ell \to \infty$, as shown previously in Step 1, the stated result follows.\end{proof}

\begin{lem}\label{UmMartingale}
Let $\theta > 0$. With $u(\theta)$ defined in Subsection \ref{subsec:notdef}, the sequence of random variables $(U_m(\theta))_{m \geq 0}$ defined by $U_0(\theta):=1$ and
$$U_m(\theta):=\prod_{i=0}^{m-1}\left [ \frac{\exp(\theta f(A_{i+1}))}{u_{A_i}(\theta)}\cdot\frac{u_{A_{i+1}}(\theta)}{\rho(\theta)}\right ]=\frac{\exp(\theta S_m)u_{A_m}(\theta)}{\rho(\theta)^m u_{A_0}(\theta)} \ \ \text{, for } m \geq 1 $$
is a martingale with respect to the canonical filtration  $\mathcal{F}_m = \sigma(A_0,\ldots, A_m).$
\end{lem}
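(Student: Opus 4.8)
The plan is to verify the three defining properties of a martingale---adaptedness, integrability, and the one-step conditional expectation identity---with the last being the only substantive point. Adaptedness is immediate, since by construction $U_m(\theta)$ is a function of $A_0, \ldots, A_m$ and hence $\mathcal{F}_m$-measurable. Integrability is equally immediate because the state space $\mathcal{A}$ is finite: the scores $f(\alpha)$ take finitely many values, the eigenvector components $u_\alpha(\theta)$ are positive and bounded, and $\rho(\theta) > 0$, so each $U_m(\theta)$ is in fact bounded.

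The heart of the argument is to establish $\E[U_{m+1}(\theta) \mid \mathcal{F}_m] = U_m(\theta)$. First I would exploit the telescoping product structure to peel off the last factor,
$$U_{m+1}(\theta) = U_m(\theta) \cdot \frac{\exp(\theta f(A_{m+1}))}{u_{A_m}(\theta)} \cdot \frac{u_{A_{m+1}}(\theta)}{\rho(\theta)}.$$
Since $U_m(\theta)$, $u_{A_m}(\theta)$ and $\rho(\theta)$ are $\mathcal{F}_m$-measurable (or constant), they pull out of the conditional expectation, leaving
$$\E[U_{m+1}(\theta) \mid \mathcal{F}_m] = \frac{U_m(\theta)}{\rho(\theta)\, u_{A_m}(\theta)}\, \E\!\left[\exp(\theta f(A_{m+1}))\, u_{A_{m+1}}(\theta) \mid \mathcal{F}_m\right].$$

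Next I would invoke the Markov property: conditionally on $\mathcal{F}_m$ the law of $A_{m+1}$ depends only on $A_m$, whence
$$\E\!\left[\exp(\theta f(A_{m+1}))\, u_{A_{m+1}}(\theta) \mid \mathcal{F}_m\right] = \sum_{\beta \in \mathcal{A}} p_{A_m \beta}\, e^{\theta f(\beta)}\, u_\beta(\theta).$$
The right-hand side is exactly the $A_m$-th component of $\Phi(\theta) u(\theta)$, which by the Perron--Frobenius eigenvector relation $\Phi(\theta) u(\theta) = \rho(\theta) u(\theta)$ equals $\rho(\theta)\, u_{A_m}(\theta)$. Substituting this back cancels the normalising factor $\rho(\theta)\, u_{A_m}(\theta)$ and yields $\E[U_{m+1}(\theta) \mid \mathcal{F}_m] = U_m(\theta)$, completing the proof. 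I do not anticipate any genuine obstacle; the only point requiring care is to recognise that the weighted transition sum produced by the single Markov step coincides precisely with one application of $\Phi(\theta)$ to $u(\theta)$, which is exactly what makes the eigenvalue $\rho(\theta)$ and the eigenvector normalisation telescope against the factors in the product defining $U_m(\theta)$.
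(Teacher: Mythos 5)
Your proof is correct and follows essentially the same route as the paper: peel off the last factor of the product, use $\mathcal{F}_m$-measurability of $U_m(\theta)$ and $u_{A_m}(\theta)$, apply the Markov property, and identify the resulting sum with $(\Phi(\theta)u(\theta))_{A_m}=\rho(\theta)u_{A_m}(\theta)$. The only difference is that you also spell out adaptedness and integrability, which the paper leaves implicit.
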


\begin{proof}
We have
$$U_{m+1}(\theta)=U_m(\theta) \frac{\exp(\theta f(A_{m+1})) u_{A_{m+1}}(\theta)}{u_{A_m}(\theta)\rho(\theta)}.$$
Since $U_m(\theta)$ and $u_{A_m} (\theta)$ are measurable with respect to $\mathcal{F}_m$, we have
$$
\E[U_{m+1}(\theta) | \mathcal{F}_m] = U_m(\theta) \frac{\E[\exp(\theta f(A_{m+1})) u_{A_{m+1}}(\theta) | \mathcal{F}_m] }{u_{A_m} (\theta)\rho(\theta)}.
$$
By the Markov property we further have  
$$
\E[\exp(\theta f(A_{m+1})) u_{A_{m+1}}(\theta) | \mathcal{F}_m]= \E[\exp(\theta f(A_{m+1})) u_{A_{m+1}}(\theta) | A_m]$$
and by definition of $u(\theta)$,
\begin{align*}
\E[\exp(\theta f(A_{m+1})) u_{A_{m+1}}(\theta) | A_m = \alpha] &= \sum_{\beta} \exp(\theta f(\beta)) u_{\beta}(\theta) p_{\alpha \beta}\\
& = (\Phi(\theta)u(\theta))_{\alpha}=u_{\alpha}(\theta)\rho(\theta).
\end{align*}
We deduce
$$\E[\exp(\theta f(A_{m+1})) u_{A_{m+1}}(\theta) | A_m] = u_{A_m}(\theta)\rho(\theta),$$
hence
$
\E[U_{m+1}(\theta) | \mathcal{F}_m]=U_{m}(\theta),
$
which finishes the proof.\end{proof}

\begin{lem}%\NotKD{Lemma 4.2 de \cite{KDe92}}
\label{EspeUsigma}
With $\theta^*$ defined at the end of Subsection  \ref{subsec:notdef} we have
\begin{equation}
\forall\alpha \in \mathcal{A}: \ \quad \frac{1}{u_\alpha(\theta^*)} \sum_{\beta \in \mathcal{A}^+} \sum_{\ell = 1}^{\infty} L^{(\ell)}_{\alpha \beta} \ e^{\theta^* \ell d} \ u_\beta(\theta^*) = 1.
\end{equation}
\end{lem}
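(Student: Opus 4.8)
The plan is to recognize the left-hand side as the expectation of the martingale $U_m(\theta^*)$ from Lemma \ref{UmMartingale} stopped at $\sigma^+$, and then to justify optional stopping despite $\sigma^+$ being defective. First I would use the fact that $\rho(\theta^*)=1$, so that the martingale of Lemma \ref{UmMartingale} reduces to $U_m(\theta^*)=e^{\theta^* S_m}u_{A_m}(\theta^*)/u_{A_0}(\theta^*)$. Under $\P_\alpha$, on the event $\{\sigma^+<\infty,\,S_{\sigma^+}=\ell d,\,A_{\sigma^+}=\beta\}$ one has $U_{\sigma^+}(\theta^*)=e^{\theta^*\ell d}u_\beta(\theta^*)/u_\alpha(\theta^*)$, so grouping the terms according to $S_{\sigma^+}=\ell d$ and $A_{\sigma^+}=\beta$ and invoking the definition of $L^{(\ell)}_{\alpha\beta}$ shows directly that the left-hand side equals $\E_\alpha[U_{\sigma^+}(\theta^*);\,\sigma^+<\infty]$. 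It therefore suffices to prove that this quantity equals $U_0(\theta^*)=1$.

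Next I would apply the optional stopping theorem to the bounded stopping time $\sigma^+\wedge N$, which gives $\E_\alpha[U_{\sigma^+\wedge N}(\theta^*)]=1$ for every $N$. Splitting according to $\{\sigma^+\leq N\}$ and $\{\sigma^+> N\}$ yields
$$1=\E_\alpha[U_{\sigma^+}(\theta^*);\,\sigma^+\leq N]+\E_\alpha[U_N(\theta^*);\,\sigma^+> N].$$
Since $U$ is nonnegative, the first term increases to $\E_\alpha[U_{\sigma^+}(\theta^*);\,\sigma^+<\infty]$ by monotone convergence as $N\to\infty$, so the whole argument reduces to showing that the remainder $\E_\alpha[U_N(\theta^*);\,\sigma^+> N]$ vanishes in the limit.

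This remainder is the main obstacle, precisely because $\sigma^+$ is defective: as $\P_\alpha(\sigma^+=\infty)=1-L_\alpha(\infty)>0$ by hypothesis (\ref{Hyp:ScoreMoyNeg}), one cannot simply bound it by $\P_\alpha(\sigma^+> N)$, which does \emph{not} tend to $0$. Instead I would argue pointwise. On the event $\{\sigma^+> N\}$ one has $S_N\leq 0$, hence $U_N(\theta^*)\leq \max_\gamma u_\gamma(\theta^*)/u_\alpha(\theta^*)=:C$, a finite constant since $\mathcal{A}$ is finite and $\theta^*>0$. Moreover the ergodic theorem together with $\E[f(A)]<0$ gives $S_N\to-\infty$ a.s., so $e^{\theta^* S_N}\to 0$ and therefore $U_N(\theta^*)\,\1_{\{\sigma^+> N\}}\to 0$ a.s.: on $\{\sigma^+<\infty\}$ the indicator is eventually zero, while on $\{\sigma^+=\infty\}$ the factor $e^{\theta^* S_N}$ drives the product to zero. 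Being dominated by the constant $C$ and converging to $0$ almost surely, the bounded convergence theorem gives $\E_\alpha[U_N(\theta^*);\,\sigma^+> N]\to 0$. Combining this with the previous step yields $\E_\alpha[U_{\sigma^+}(\theta^*);\,\sigma^+<\infty]=1$, which is exactly the claimed identity.
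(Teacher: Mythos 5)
Your proof is correct, and the skeleton is the same as the paper's: you identify the left-hand side with $\E_\alpha[U_{\sigma^+}(\theta^*);\sigma^+<\infty]$, apply optional stopping to $U_{\sigma^+\wedge N}(\theta^*)$, pass to the limit in the first term by monotone convergence, and reduce everything to showing $\E_\alpha[U_N(\theta^*);\sigma^+>N]\to 0$. Where you genuinely diverge is in how you kill this remainder. The paper's argument is considerably more elaborate: it truncates at a level $-2a$, splits into the events $\{S_n\le -a\}$, $\{S_n>-a,\ \exists k<n: S_k\le -2a\}$ and $\{-2a\le S_k\le 0\ \forall k\le n\}$, invokes the strong Markov property to bound one piece by $\sum_\beta\P_\beta(S^+>a)$, and then needs Lemma \ref{Splus} (that $\P_\beta(S^+>a)\to 0$) before letting $a\to\infty$. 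You instead observe that on $\{\sigma^+>N\}$ one has $S_N\le 0$, so $U_N(\theta^*)\1_{\{\sigma^+>N\}}$ is uniformly bounded by $\max_\gamma u_\gamma(\theta^*)/u_\alpha(\theta^*)$, and converges to $0$ a.s.\ because $S_N\to-\infty$ a.s.\ (the ergodic theorem, which the paper also uses); bounded convergence finishes the job. This is both shorter and structurally cleaner: it removes the dependence of Lemma \ref{EspeUsigma} on Lemma \ref{Splus}, which is welcome given that Lemma \ref{Splus} itself rests on the stochasticity of ${\bf G(\infty)}$, a fact the paper justifies by citing Lemma \ref{EspeUsigma} — so your route also dissolves an apparent circularity in the paper's chain of dependencies.
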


\begin{proof}
The proof uses Lemma \ref{Splus} and ideas inspired from \cite{KDe92} (Lemma 4.2). First note that the above equation is equivalent to
$$\E_{\alpha}[U_{\sigma^+}(\theta^*);\sigma^+<\infty]=1,$$
with $U_m(\theta)$ defined in Lemma \ref{UmMartingale}. By applying the optional sampling theorem to the bounded stopping time $\tau_n := \min(\sigma^+,n)$ and to the martingale $(U_m(\theta^*))_m$, we obtain
$$
1=\E_\alpha[U_0(\theta^*)]=\E_\alpha[U_{\tau_n}(\theta^*)] = \E_\alpha[U_{\sigma^+}(\theta^*); \sigma^+ \leq n] + \E_\alpha[U_{n}(\theta^*); \sigma^+ > n].
$$ 
We will show that $\E_\alpha[U_{n}(\theta^*); \sigma^+ > n] \longrightarrow 0$ when $n \to \infty$. Passing to the limit in the previous relation will then give the desired result. Since $\rho(\theta^*)=1$, we have
$$U_n(\theta^*) =\frac{\exp(\theta^* S_n)u_{A_n}(\theta^*)}{u_{A_0}(\theta^*)}$$
and it suffices to show that $\lim_{n \to \infty}\E_\alpha[\exp(\theta^* S_n); \sigma^+ > n] = 0$.\par
For a fixed $a > 0$ we can write
\begin{align}\label{esperance}
\E_\alpha[\exp(\theta^* S_n); \sigma^+ > n] &= \E_\alpha[\exp(\theta^* S_n); \sigma^+ > n, \ \exists k \leq n:S_k \leq -2a] \nonumber  \\
&\ \ \ +  \E_\alpha[\exp(\theta^* S_n); \sigma^+ > n, -2a \leq S_k \leq 0, \ \forall 0 \leq k \leq n].
\end{align}
The first expectation in the right-hand side of Equation (\ref{esperance}) can further be bounded as follows:
\begin{align}\label{esperance1}
\E_\alpha[\exp(\theta^* S_n);  \sigma^+  & >  n , \ \exists k \leq n:S_k \leq -2a]  \leq  \E_\alpha[\exp(\theta^* S_n); \sigma^+ > n, S_n \leq -a] \nonumber \\
& + \E_\alpha[\exp(\theta^* S_n); \sigma^+ > n, S_n > -a, \ \exists k <  n:S_k \leq -2a].
\end{align}
We obviously have
\begin{equation}\label{esperance2}
\E_\alpha[\exp(\theta^* S_n); \sigma^+ > n, S_n \leq -a] \leq \exp(-\theta^* a).
\end{equation}

Let us further define the stopping time $T := \inf\{k \geq 1 : S_k \leq -2a\}$. Note that $T < \infty$ \textit{a.s.} since $S_n \longrightarrow -\infty$ \textit{a.s.} when $n \to \infty$. Indeed, by the ergodic theorem we have $S_n/n \longrightarrow \E[f(A)] < 0$ when $n \to \infty$. Therefore we have
\begin{align*}
\E_\alpha[\exp(\theta^* S_n)& ;  \sigma^+ > n, S_n > -a,  \ \exists k <  n:S_k \leq -2a] \leq \P_\alpha(T \leq n, S_n > -a)\\
&= \sum_{\beta \in \mathcal{A}^-} \P_\alpha(T \leq n, S_n > -a \ | A_T = \beta)\P_\alpha(A_T = \beta)\\
& \leq \sum_{\beta \in \mathcal{A}^-} \P_\alpha(S_n - S_T > a \ | A_T = \beta)\P_\alpha(A_T = \beta)\\
& \leq \sum_{\beta \in \mathcal{A}^-} \P_\beta(S^+ > a) \P_\alpha(A_T = \beta),
\end{align*}
by the strong Markov property.
For every $a > 0$ we thus have
\begin{equation}\label{esperance3}
\limsup_{n \to \infty} \E_\alpha[\exp(\theta^* S_n); \sigma^+ > n, S_n > -a, \ \exists k <  n:S_k \leq -2a] \leq \sum_{\beta \in \mathcal{A}^-} \P_\beta(S^+ > a).
\end{equation}
%By Lemma \ref{Splus}, we have $\lim_{a \to \infty} \P_\beta(S^+ > a)= 0$ for every $\beta \in \mathcal{A}^-$, implying that the last sum converges to 0 when $a \to \infty$.

Considering the second expectation in the right-hand side of Equation (\ref{esperance}), we have
\begin{equation}\label{esperance4}
\lim_{n \to \infty} \P_\alpha(-2a \leq S_k \leq 0, \  \forall 0 \leq k \leq n) =  \P_\alpha(-2a \leq S_k \leq 0, \  \forall  k \geq 0) = 0, 
\end{equation}
again since $S_n \longrightarrow -\infty$ \textit{a.s.} when $n \to \infty$.

Equations (\ref{esperance}),(\ref{esperance1}),(\ref{esperance2}),(\ref{esperance3}) and (\ref{esperance4}) imply that for every $a > 0$ we have
$$
\limsup_{n \to \infty} \E_\alpha[\exp(\theta^* S_n);  \sigma^+ > n] \leq  \exp(-\theta^*a) +  \sum_{\beta \in \mathcal{A}^-} \P_\beta(S^+ > a).
$$
Using Lemma \ref{Splus} and taking $a \to \infty$ we obtain $\lim_{n \to \infty}\E_\alpha[\exp(\theta^* S_n); \sigma^+ > n] = 0.$
\end{proof}

We are now ready to prove the Theorem  \ref{res:asymptS+}.

\noindent {\bf Proof of Theorem \ref{res:asymptS+}:}\\
%%%%%%%%%%%%%%%%%%%%%%%%%%%%%%%%%%%%%%%%%%%%%%%%%%%%
%With $F_{S^+,\alpha}$ defined in Theorem \ref{res:exactS+}, we introduce for every $\alpha$ and $\ell \geq 0$:
%$$
%b_{\alpha}(\ell d) := \frac{1-F_{S^+,\alpha}(\ell d)}{u_{\alpha}(\theta^*)}e^{\theta^*\ell d}, \ \ a_{\alpha}(\ell d) := \frac{L_{\alpha}(\infty) - L_{\alpha}(\ell d)}{u_{\alpha}(\theta^*)}e^{\theta^*\ell d}.
%$$
%Theorem \ref{res:exactS+} allows to obtain the following renewal system for the family $(b_{\alpha})_{\alpha}$ : for every $\ell > 0$ and $\alpha \in \mathcal{A}$ we have :
%$$
%b_{\alpha}(\ell d)= a_{\alpha}(\ell d) + \sum_{\beta} \sum_{k = 0}^{\ell} b_{\beta}((\ell-k)d) G^{(k)}_{\alpha \beta}. 
%$$
%Since the restriction $\tilde G(\infty)$ of $\tilde G(\infty)$ is a stochastic matrix, its spectral radius equals 1 and a corresponding right eigenvector is the vector having all components equal to 1; a left eigenvector is the stationary frequency vector $\tilde w > 0$ on $\mathcal{A}^+$. 
%
%%Notons ensuite que, pour tout $\alpha$, on a $\hat{a}_{\alpha}= \int_0^{\infty} a_{\alpha}(x)dx,$
%%avec $a_{\alpha}(\cdot) $ apparaissant dans le syst�me d'�quations de renouvellement v�rifi� par la famille $(b_{\alpha}(\cdot))_{\alpha}$.
%
For $\alpha \in \mathcal{A}^+$ the formula has been already shown in Step 1 of the proof of Lemma \ref{Splus}.
For $\alpha \notin \mathcal{A}^+$ we will prove the stated formula using Theorem \ref{res:exactS+}. From Equation (\ref{eq:S+}), we have
\begin{equation*}
\P_\alpha(S^+ > \ell d) = \sum_{\beta \in \mathcal{A}^+} \sum_{k=1}^{\infty} L^{(k)}_{\alpha \beta} \ \P_\beta(S^+ > (\ell-k) d),
\end{equation*}
hence
\begin{equation*}
\frac{e^{\theta^*{\ell d}}\P_{\alpha}(S^+>\ell d)}{u_{\alpha}(\theta^*)} = \sum_{\beta \in \mathcal{A}^+} \sum_{k=1}^{\infty}  \frac{e^{\theta^*{(\ell-k)d}}\P_{\beta}(S^+>(\ell-k)d)}{u_{\beta}(\theta^*)} L^{(k)}_{\alpha \beta} e^{\theta^*{kd}} \frac{u_{\beta}(\theta^*)}{u_{\alpha}(\theta^*)}.
\end{equation*}
Note that for every $\alpha$ and $\beta$ there are a finite number of non-null terms in the above sum over $k$. Moreover, as shown in Lemma \ref{Splus}
$$\forall\beta \in \mathcal{A}^+,\ \forall k\geq 0:\ \quad 
 \frac{e^{\theta^*{(\ell-k)d}}\P_{\beta}(S^+>(\ell-k)d)}{u_{\beta}(\theta^*)} \underset{\ell \to \infty}{\longrightarrow} c(\infty).
$$
We finally obtain that
$$
\lim_{\ell \rightarrow +\infty}  \frac{e^{\theta^*{\ell d}}\P_{\alpha}(S^+> \ell d)}{u_{\alpha}(\theta^*)} = \frac{c(\infty)}{u_\alpha(\theta^*)} \sum_{\beta \in \mathcal{A}^+} \sum_{k = 1}^{\infty} L^{(k)}_{\alpha \beta} \ e^{\theta^* k d} \ u_\beta(\theta^*),
$$
which equals $c(\infty)$ as desired, by Lemma \ref{EspeUsigma}.

\subsection{Proof of Theorem \ref{res:Q1}}
%%%%%%%%%%%%%%%%%%%%%%%%%%%%%%%%%%%%%%%%%%
Since $S^+ \geq Q_1$, for every $\alpha \in \mathcal{A}$ we have
$$
\P_{\alpha}(S^+>kd) = \P_{\alpha}(Q_1>kd) + \P_{\alpha}(S^+>kd, Q_1 \leq kd).
$$
We will further decompose the last probability with respect to the values taken by $S_{\sigma^-}$ and $A_{\sigma^-}$, as follows:
\begin{align*}
& \P_{\alpha}(S^+>kd, Q_1 \leq kd) = \sum_{\ell<0}\sum_{\beta \in \mathcal{A}^-}\P_{\alpha}(S^+>kd, Q_1 \leq kd, S_{\sigma^-}=\ell d,A_{\sigma^-}=\beta)\\
&=  \sum_{\ell<0}\sum_{\beta \in \mathcal{A}^-}\P_{\alpha}(S^+ -S_{\sigma^-} >(k-\ell)d \ | A_{\sigma^-}=\beta, Q_1 \leq kd, S_{\sigma^-}=\ell d )  \\
& \ \ \ \  \times \P_{\alpha}(Q_1 \leq kd, S_{\sigma^-}=\ell d, A_{\sigma^-}=\beta)\\
&=  \sum_{\ell<0}\sum_{\beta \in \mathcal{A}^-}\P_{\beta}(S^+ >(k-\ell)d ) \cdot  \left\{Q_{\alpha\beta}^{(\ell)}- \P_{\alpha}(Q_1 > kd, S_{\sigma^-}=\ell d, A_{\sigma^-}=\beta)\right \},
%&= \sum_{\ell<0}\sum_{\beta \in \mathcal{A}^-}\P_{\beta}(S^+ >(k-\ell)d ) \cdot  %Q_{\alpha\beta}^{(\ell)}+o(1),
\end{align*}
by applying the strong Markov property to the stopping time $\sigma^-$. We thus obtain
\begin{align*}
& \P_{\alpha}(S^+>kd)-\sum_{\ell <0}\sum_{\beta \in \mathcal{A}^-}\P_{\beta}(S^+>(k-\ell )d)\cdot Q_{\alpha\beta}^{(\ell )} - \P_{\alpha}(Q_1>kd) \\
& = - \sum_{\ell <0}\sum_{\beta \in \mathcal{A}^-}\P_{\beta}(S^+ >(k-\ell )d ) \ \P_{\alpha}(Q_1 > kd, S_{\sigma^-}=\ell d, A_{\sigma^-}=\beta).
\end{align*}
By Theorem \ref{res:asymptS+} we have $\P_{\beta}(S^+ >kd ) = O(e^{-\theta^*kd})$ as $k \to \infty$, for every $\beta \in \mathcal{A}^-$, from which we deduce that the left-hand side of the previous equation is $o(\P_{\alpha}(Q_1 > kd))$ when $k \to \infty$.
The stated result then easily follows. $\hfill\square$

\subsection{Proof of Theorem \ref{res:Mn}}
%%%%%%%%%%%%%%%%%%%%%%%%%%%%%%%%%%%%%%%%%%%%

We will first prove some useful lemmas.

\begin{lem}\label{lem:rho'}
%There exists a unique $\theta^* > 0$ such that $\rho(\theta^*)=1$.
We have
$\rho'(0)=\E[f(A)]<0.$
\end{lem}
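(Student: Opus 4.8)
The plan is to compute $\rho'(0)$ by differentiating the defining eigenvalue relation for the matrix $\Phi(\theta)$ and evaluating at $\theta = 0$, exploiting the fact that at $\theta = 0$ the matrix $\Phi(0)$ is simply the stochastic transition matrix $\mathbf{P}$, whose dominant eigenvalue is $\rho(0) = 1$ with the all-ones vector as right eigenvector and the stationary vector $\pi$ as left eigenvector. The key algebraic identity is the Perron-Frobenius relation
\begin{equation*}
\Phi(\theta)\, u(\theta) = \rho(\theta)\, u(\theta),
\end{equation*}
which holds for all $\theta$ in a neighborhood of $0$, together with the differentiability of $\theta \mapsto \rho(\theta)$ and $\theta \mapsto u(\theta)$ guaranteed by the simplicity of the dominant eigenvalue (as recalled in Subsection \ref{subsec:notdef}).

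First I would introduce the left Perron eigenvector of $\Phi(\theta)$, call it $v(\theta)$ (a row vector), satisfying $v(\theta)\Phi(\theta) = \rho(\theta) v(\theta)$, and note that at $\theta = 0$ we have $\Phi(0) = \mathbf{P}$, so $u(0)$ is proportional to the all-ones vector $\mathbf{1}$ and $v(0)$ is proportional to the stationary frequency vector $\pi$. Next I would differentiate the eigenvalue relation $\Phi(\theta) u(\theta) = \rho(\theta) u(\theta)$ with respect to $\theta$, giving
\begin{equation*}
\Phi'(\theta) u(\theta) + \Phi(\theta) u'(\theta) = \rho'(\theta) u(\theta) + \rho(\theta) u'(\theta).
\end{equation*}
Then I would left-multiply by $v(\theta)$: because $v(\theta)\Phi(\theta) = \rho(\theta)v(\theta)$, the two terms involving $u'(\theta)$ cancel, leaving the clean expression
\begin{equation*}
\rho'(\theta) = \frac{v(\theta)\,\Phi'(\theta)\, u(\theta)}{v(\theta)\, u(\theta)}.
\end{equation*}

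To finish, I would evaluate this at $\theta = 0$. Differentiating $\Phi(\theta) = (p_{\alpha\beta} e^{\theta f(\beta)})_{\alpha,\beta}$ entrywise gives $\Phi'(\theta) = (p_{\alpha\beta} f(\beta) e^{\theta f(\beta)})_{\alpha,\beta}$, so $\Phi'(0) = (p_{\alpha\beta} f(\beta))_{\alpha,\beta}$. With $u(0) \propto \mathbf{1}$ and $v(0) \propto \pi$, the numerator becomes $\sum_{\alpha,\beta} \pi_\alpha p_{\alpha\beta} f(\beta) = \sum_\beta f(\beta) \sum_\alpha \pi_\alpha p_{\alpha\beta} = \sum_\beta f(\beta)\pi_\beta$, using the stationarity identity $\pi \mathbf{P} = \pi$; the denominator $v(0)u(0) \propto \pi \mathbf{1} = \sum_\alpha \pi_\alpha = 1$. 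Hence $\rho'(0) = \sum_\beta \pi_\beta f(\beta) = \E[f(A)]$, and the strict inequality $\rho'(0) < 0$ is immediate from Hypothesis $(\ref{Hyp:ScoreMoyNeg})$.

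The only delicate point is justifying the differentiability of $u(\theta)$ (and $v(\theta)$) near $\theta = 0$, so that the differentiation of the eigenvalue relation is legitimate; this is precisely where the simplicity of the Perron root from the Perron-Frobenius theorem is used, and it is already recorded in Subsection \ref{subsec:notdef} with references to \cite{Lancaster,DKa91b,KOs87}. Beyond that, the computation is entirely routine — the cancellation of the $u'(\theta)$ terms via the left eigenvector is the standard first-order eigenvalue perturbation argument, and no genuine obstacle arises.
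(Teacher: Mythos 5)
Your proof is correct and is essentially the paper's own argument: the paper also differentiates the eigenvalue relation $\Phi(\theta)u(\theta)=\rho(\theta)u(\theta)$, then pairs with the stationary vector $\pi$ (the left Perron eigenvector of $\Phi(0)={\bf P}$) so that the $u'(0)$ terms cancel via $\pi{\bf P}=\pi$, yielding $\rho'(0)=\sum_\beta \pi_\beta f(\beta)=\E[f(A)]<0$. The only cosmetic difference is that you state the perturbation formula $\rho'(\theta)=v(\theta)\Phi'(\theta)u(\theta)/(v(\theta)u(\theta))$ for general $\theta$ before specializing, whereas the paper carries out the same cancellation coordinate-wise directly at $\theta=0$.
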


\begin{proof}
By the fact that $\rho(\theta)$ is an eigenvalue of the matrix $\Phi(\theta)$ with corresponding eigenvector $u(\theta)$,  we have
$$\rho(\theta)u_{\alpha}(\theta)=\left (\Phi(\theta)u(\theta)\right )_{\alpha}
=\sum_\beta p_{\alpha\beta}e^{\theta f(\beta)}u_\beta(\theta).$$
When derivating the previous relation with respect to $\theta$ we obtain
$$\frac{d}{d \theta}(\rho (\theta)u_\alpha(\theta))=
\sum_\beta p_{\alpha\beta} \left ( f(\beta)e^{\theta f(\beta)}u_\beta(\theta)+e^{\theta f(\beta)}u'_{\beta}(\theta)\right ).$$
We have $\rho(0)=1$ et $u(0)=^t(1/r,\dots,1/r)$. For $\theta=0$, we then get
\begin{equation}\label{eq:3}
\left. \sum_\alpha \pi_{\alpha} \frac{d}{d \theta}(\rho (\theta)u_\alpha(\theta)) \right |_{\theta=0} =\frac{1}{r}\E[f(A)]+\sum_{\alpha,\beta}\pi_{\alpha} p_{\alpha\beta}u'_{\beta}(0)
=\frac{1}{r}\E[f(A)]+\sum_{\beta}\pi_{\beta}u'_{\beta}(0).
\end{equation}

On the other hand,
$$ \sum_\alpha \pi_{\alpha} \frac{d}{d \theta}(\rho (\theta)u_\alpha(\theta))=
\frac{d}{d \theta}\left(\sum_\alpha \pi_{\alpha} \rho (\theta)u_\alpha(\theta)\right) =\rho'(\theta)\sum_\alpha \pi_{\alpha} u_\alpha(\theta)+\rho(\theta)\sum_\alpha \pi_{\alpha} u'_\alpha(\theta).$$
For $\theta=0$ we get
\begin{equation}\label{eq:4}
\left. \sum_\alpha \pi_{\alpha} \frac{d}{d \theta}(\rho (\theta)u_\alpha(\theta))  \right |_{\theta=0}=\frac{\rho'(0)}{r}+\rho(0)\cdot\sum_{\alpha}\pi_{\alpha}u'_{\alpha}(0).
\end{equation}
From Equations (\ref{eq:3}) and (\ref{eq:4}) we deduce
$$\frac{\rho'(0)}{r}+\sum_{\alpha}\pi_{\alpha}u'_{\alpha}(0)=\frac{1}{r}\E[f(A)]+\sum_\beta\pi_{\beta}u'_{\beta}(0),$$
from which the stated result easily follows.
\end{proof}

%We will now show that $\lim_{\theta\rightarrow\infty}\rho(\theta)=\infty$. \sabine{A faire.}$\quad\bullet$

%\medskip

\begin{lem}\label{lem:DKa91}
There exist ${\cal I}>0$ and $n_0\geq 0$ such that $\forall n\geq n_0$, $\P_{\alpha}(S_n\geq 0)\leq \exp(-n{\cal I})$ for every $\alpha \in \mathcal{A}$.
\end{lem}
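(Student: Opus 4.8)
The plan is to run a Chernoff-type large deviations bound that turns the negative drift into an exponential decay rate via the Perron--Frobenius structure of $\Phi(\theta)$ already available above. First I would fix some $\theta > 0$ and apply the exponential Markov inequality: since $\theta S_n \geq 0$ on the event $\{S_n \geq 0\}$,
$$\P_\alpha(S_n \geq 0) = \P_\alpha(e^{\theta S_n} \geq 1) \leq \E_\alpha[e^{\theta S_n}].$$
The next step is to identify this moment generating function with the matrix $\Phi(\theta)$. Writing $\mathbf{1}$ for the column vector with all entries equal to $1$ and unfolding the Markov transitions, one checks that $(\Phi(\theta)^n)_{\alpha\beta} = \E_\alpha[e^{\theta S_n}; A_n = \beta]$, because a path $A_0=\alpha, A_1, \dots, A_n=\beta$ contributes its probability times $e^{\theta f(A_1)}\cdots e^{\theta f(A_n)}$. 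Summing over the terminal state gives
$$\E_\alpha[e^{\theta S_n}] = \sum_{\beta \in \mathcal{A}} (\Phi(\theta)^n)_{\alpha\beta} = (\Phi(\theta)^n \mathbf{1})_\alpha.$$

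To control this uniformly in $\alpha$ I would compare $\mathbf{1}$ with the positive right eigenvector $u(\theta)$ of $\Phi(\theta)$, rather than diagonalizing. Setting $u_{\min}(\theta) := \min_\alpha u_\alpha(\theta) > 0$ and $u_{\max}(\theta) := \max_\alpha u_\alpha(\theta)$, we have the componentwise inequality $\mathbf{1} \leq u(\theta)/u_{\min}(\theta)$; since $\Phi(\theta)$ has nonnegative entries it preserves this ordering, and using $\Phi(\theta)^n u(\theta) = \rho(\theta)^n u(\theta)$ we obtain
$$\E_\alpha[e^{\theta S_n}] = (\Phi(\theta)^n \mathbf{1})_\alpha \leq \frac{1}{u_{\min}(\theta)}(\Phi(\theta)^n u(\theta))_\alpha = \frac{\rho(\theta)^n u_\alpha(\theta)}{u_{\min}(\theta)} \leq \frac{u_{\max}(\theta)}{u_{\min}(\theta)}\, \rho(\theta)^n,$$
a bound valid for every $\alpha \in \mathcal{A}$ with the finite constant $C(\theta) := u_{\max}(\theta)/u_{\min}(\theta)$.

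It remains to choose $\theta$ so that the base $\rho(\theta)$ is strictly less than $1$. Since $\rho(0) = 1$ and, by Lemma \ref{lem:rho'}, $\rho'(0) = \E[f(A)] < 0$, there is some $\theta_0 > 0$ with $\rho(\theta_0) < 1$. Fixing such a $\theta_0$ and choosing any $\mathcal{I} \in (0, -\log\rho(\theta_0))$, the bound $\P_\alpha(S_n \geq 0) \leq C(\theta_0)\rho(\theta_0)^n$ gives
$$e^{n\mathcal{I}}\,\P_\alpha(S_n \geq 0) \leq C(\theta_0)\, e^{n(\mathcal{I} + \log\rho(\theta_0))} \longrightarrow 0 \quad (n\to\infty),$$
since $\mathcal{I} + \log\rho(\theta_0) < 0$; hence there exists $n_0$ with $C(\theta_0)\rho(\theta_0)^n \leq e^{-n\mathcal{I}}$ for all $n \geq n_0$, uniformly in $\alpha$, which is the claim. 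The only mild subtlety, and the step I would be most careful with, is this absorption of the multiplicative constant $C(\theta_0)$ into the exponential rate by slightly degrading the exponent from $-\log\rho(\theta_0)$ down to $\mathcal{I}$; everything else follows directly from the Perron--Frobenius data and the finiteness of $\mathcal{A}$ already at hand.
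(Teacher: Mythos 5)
Your proof is correct, but it takes a genuinely different route from the paper. The paper invokes a black-box large deviation principle for additive functionals of Markov chains (Theorem 3.1.2 of \cite{DKa91b}) to get $\limsup_n \frac{1}{n}\log \P_\alpha(S_n/n \in [0,\infty)) \leq -\mathcal{I}$ with $\mathcal{I}$ the infimum over $x \geq 0$ of the Legendre-type rate function $I(x)=\sup_\theta(\theta x - \log\rho(\theta))$, and then spends most of the proof on an analysis of that rate function (monotonicity of $I$, reduction of the supremum to $\theta \geq 0$, identification $\mathcal{I}=I(0)=-\log\inf_{\theta\geq 0}\rho(\theta)$) to verify $\mathcal{I}>0$. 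You instead give a self-contained Chernoff bound: the identity $\E_\alpha[e^{\theta S_n}]=(\Phi(\theta)^n\mathbf{1})_\alpha$, the componentwise comparison $\mathbf{1}\leq u(\theta)/u_{\min}(\theta)$ and the eigenvalue relation $\Phi(\theta)^n u(\theta)=\rho(\theta)^n u(\theta)$ yield $\P_\alpha(S_n\geq 0)\leq C(\theta)\rho(\theta)^n$ uniformly in $\alpha$, and any $\theta_0>0$ with $\rho(\theta_0)<1$ (which exists by $\rho(0)=1$ and $\rho'(0)=\E[f(A)]<0$ from Lemma \ref{lem:rho'} --- the same lemma the paper's proof relies on, so no circularity) finishes the argument after absorbing the constant, which you handle correctly. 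What each approach buys: the paper's route identifies the optimal exponential rate $-\log\inf_{\theta\geq 0}\rho(\theta)$, which your argument also recovers in the limit by optimizing over $\theta_0$ but does not need, since the lemma only asks for some $\mathcal{I}>0$; your route is more elementary, avoids citing the external LDP theorem, and uses only the Perron--Frobenius data already set up in Section \ref{subsec:notdef}. Both are valid proofs of the statement.
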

\begin{proof}
By a large deviation principle for additive functionals of Markov chains (see Theorem 3.1.2. in \cite{DKa91b}) we have
$$\limsup_{n\rightarrow +\infty}\frac{1}{n}\log\left ( \P_{\alpha}\left(\frac{S_n}{n} \in \Gamma\right)\right)
\leq -{\cal I},$$
with $\Gamma=[0,+\infty )$ and  ${\cal I}=\inf_{x\in \bar{\Gamma}} \sup_{\theta\in\R} (\theta x-\log \rho(\theta))$. 
Since $\cal A$ is finite, it remains to prove that ${\cal I}>0$. 

For every $x \geq 0$, let us denote $g_x(\theta):=\theta x-\log \rho(\theta)$ and $I(x):=\sup_{\theta\in\R} g_x(\theta)$.
We will first show that $I(x)=\sup_{\theta\in\R^+} g_x(\theta)$.
%the supremum $\sup_{\theta\in\R} (\theta x-\log \rho(\theta))$ is reached for some $\theta=\theta(x)\geq 0$. 
Indeed, we have $g'_x(\theta)= x -  \rho'(\theta)/\rho(\theta)$. By the strict convexity property of $\rho$ (see \cite{DKa91b, KOs87}) and the fact that $\rho '(0)=\E[f]<0$ (by Lemma \ref{lem:rho'}), we deduce that $\rho'(\theta)<0$ for every $\theta \leq 0$, implying that  $g'_x(\theta)> x \geq 0$ for $\theta \leq 0$. The function $g'_x$ is therefore increasing on $\R^-$, and hence $I(x)=\sup_{\theta\in\R^+} g_x(\theta)$.
%We thus have $\rho'(\theta(x))/\rho(\theta(x))\geq 0$. 
%Since $\rho(\theta(x))>0$ by Perron-Frobenius theorem, we deduce that $\theta(x)$ verifies $\rho'(\theta(x))>0$. Using the strict convexity property of $\rho$ (see \cite{DKa91b, KOs87}) and the fact that $\rho '(0)=\E[f]<0$ (by Lemma \ref{lem:rho'}), we deduce that
%have that$y<0$ implies $\rho'(y)<0$. This leads to 
%$\theta(x)\geq 0$. Therefore, we have $I(x)=\sup_{\theta\in\R^+} g_x(\theta)$.

As a consequence, we deduce that $x \mapsto I(x)$ is non-decreasing on $\R^+$.
%Indeed,  let consider $x,y$ such that $x\leq y$, then $\forall \theta \geq 0$, we have
%\begin{eqnarray*}
%\theta x-\log \rho(\theta)&\leq &\theta y-\log \rho(\theta)\\
%\mbox{so } \sup_{\theta}g_x(\theta) &\leq & \sup_{\theta}g_y(\theta)\\
%\mbox{and } I(x)&\leq & I(y)\\
%\end{eqnarray*}
We thus obtain ${\cal I}=\inf_{x\in \R^+}I(x)=I(0)$. 

Further, we have $I(0)=\sup_{\theta\in\R} \left(-\log \rho(\theta)\right)=-\inf_{\theta\in \R^+} \log(\rho(\theta))$. Using again the fact that $\rho'(0)<0$ (Lemma \ref{lem:rho'}), the strict convexity of $\rho$ and the fact that $\rho(0)=\rho(\theta^*)=1$, we finally obtain ${\cal I}=-\log\left(\inf_{\theta\in \R^+} \rho(\theta)\right) >-\log \rho(0)=0$. The statement then follows.
\end{proof}

\begin{lem}\label{lem:Esp_K1}
We have $\E_\alpha(K_1) < \infty$ for every $\alpha \in \mathcal{A}$.
\end{lem}
\begin{proof}
Note that $\P_\alpha(K_1>n)\leq\P_\alpha(S_n\geq 0)$. With $n_0 \in \N$ and ${\cal I}>0$ defined in  Lemma \ref{lem:DKa91}, using a well-known formula for the expectation, we get 
$$
\E_\alpha[K_1]=\sum_{n\geq 0}\P_\alpha(K_1>n)\leq\sum_{n\geq 0}\P(S_n\geq 0)\leq C + \sum_{n\geq n_0}\exp(-n{\cal I}),
$$
where $C >0$ is a constant. The statement easily follows.
\end{proof}

\begin{lem}%\NotKD{page 132 \cite{KDe92}}
\label{Km}
We have
$$\lim_{m\rightarrow +\infty} \frac{K_m}{m} =  \sum_{\beta} z_\beta \E_\beta(K_1)  \mbox{ a.s.}$$
%where
%$$
%A^*_{\beta \gamma} := \E_{\beta}[\sigma^- \ | A_{\sigma^-} = \gamma].
%$$
\end{lem}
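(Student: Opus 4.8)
The plan is to write $K_m=\sum_{i=1}^m\Delta_i$, where $\Delta_i:=K_i-K_{i-1}$ is the length of the $i$-th non-negative excursion, and to treat this sum as an additive functional of the embedded chain $(A_{K_i})_{i\geq 0}$, concluding by a regeneration argument and the ordinary strong law of large numbers for i.i.d. blocks. Recall from Subsection \ref{subsec:notdef} that $(A_{K_i})_{i\geq 1}$ is a finite irreducible Markov chain on $\mathcal{A}^-$ with transition matrix ${\bf\tilde Q}$ and stationary distribution $\tilde z=(z_\beta)_{\beta\in\mathcal{A}^-}$. By the strong Markov property applied at the stopping times $K_{i-1}$, conditionally on $(A_{K_i})_{i\geq 0}$ the increments $(\Delta_i)_{i\geq 1}$ are independent, the conditional law of $\Delta_i$ depends only on $(A_{K_{i-1}},A_{K_i})$, and $\E[\Delta_i\mid A_{K_{i-1}}=\beta]=\E_\beta[K_1]$, which is finite for every $\beta\in\mathcal{A}^-$ by Lemma \ref{lem:Esp_K1}.

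First I would fix a reference state $\beta_0\in\mathcal{A}^-$ and consider the successive return times of $(A_{K_i})$ to $\beta_0$. Since the embedded chain is finite and irreducible, it visits $\beta_0$ infinitely often a.s. and has finite mean return time $\E_{\beta_0}[T]=1/z_{\beta_0}$. By the strong Markov property, the blocks of increments delimited by consecutive visits to $\beta_0$ are i.i.d., each block having finite expected number of excursions and, using the finiteness of the $\E_\beta[K_1]$, finite expected total duration $\E_{\beta_0}\left[\sum_{i=1}^{T}\Delta_i\right]<\infty$. The strong law of large numbers for i.i.d. blocks, together with the standard squeeze between the sums over $C_m$ and $C_m+1$ complete blocks (where $C_m\to\infty$ is the number of completed cycles by step $m$), then yields
$$\frac{K_m}{m}\underset{m\to\infty}{\longrightarrow}\frac{\E_{\beta_0}\left[\sum_{i=1}^{T}\Delta_i\right]}{\E_{\beta_0}[T]}\quad\text{a.s.}$$

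It remains to identify this ratio with $\sum_\beta z_\beta\,\E_\beta[K_1]$. Decomposing the cycle sum according to the state occupied before each excursion and using the occupation identity $\E_{\beta_0}[\#\{0\leq i<T:A_{K_i}=\beta\}]=z_\beta/z_{\beta_0}$ for the finite irreducible chain, one obtains $\E_{\beta_0}\left[\sum_{i=1}^{T}\Delta_i\right]=\sum_{\beta\in\mathcal{A}^-}(z_\beta/z_{\beta_0})\,\E_\beta[K_1]$, so that the ratio equals $\sum_{\beta\in\mathcal{A}^-}z_\beta\,\E_\beta[K_1]=\sum_\beta z_\beta\,\E_\beta[K_1]$ since $z_\beta=0$ off $\mathcal{A}^-$. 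The main obstacle is that the increments $\Delta_i$ are only conditionally independent and not identically distributed (their law depends on both endpoints of the excursion), which rules out a direct application of the i.i.d. strong law; regeneration at the fixed state $\beta_0$ restores exact i.i.d. structure at the level of blocks and, crucially, requires only the finite first moments provided by Lemma \ref{lem:Esp_K1} rather than any control of higher moments.
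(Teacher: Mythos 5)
Your argument is correct, and it reaches the right constant, but it follows a genuinely different route from the paper. The paper keeps all excursions in play: after splitting off $K_1/m$, it packages the pairs $Y_i=(A_{K_{i-1}},K_i-K_{i-1})$ into a Markov chain on $\mathcal{A}^-\times\N$ with transition kernel $q_{\alpha\beta}\,\P_\beta(K_1=\ell)$, checks that $\pi(\alpha,k)=z_\alpha\P_\alpha(K_1=k)$ is its invariant distribution, and applies the ergodic theorem to the functions $\varphi_\beta(\alpha,k)=k\1_{\{\alpha=\beta\}}$, whose $\pi$-integrability is exactly $z_\beta\E_\beta(K_1)<\infty$ from Lemma \ref{lem:Esp_K1}. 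You instead regenerate at the returns of the embedded chain $(A_{K_i})$ to a fixed state $\beta_0$, reduce to the ordinary strong law for i.i.d.\ cycles, and identify the limiting ratio via Kac's occupation formula $\E_{\beta_0}[\#\{0\le i<T:A_{K_i}=\beta\}]=z_\beta/z_{\beta_0}$ and $\E_{\beta_0}[T]=1/z_{\beta_0}$. The trade-off: the paper's route is shorter once one grants the ergodic theorem for a positive recurrent chain on a countable state space with an $L^1(\pi)$ observable, at the price of constructing the auxiliary chain and verifying its irreducibility and invariant measure; your route is more elementary and self-contained (only the i.i.d.\ SLLN), makes transparent that Lemma \ref{lem:Esp_K1} enters precisely through the finiteness of the expected cycle duration, but requires the regeneration bookkeeping (i.i.d.\ blocks via the strong Markov property of the original chain at the times $K_i$ with $A_{K_i}=\beta_0$, the squeeze between $C_m$ and $C_m+1$ blocks, and the a.s.\ finite initial segment before the first visit to $\beta_0$, which also absorbs the fact that $A_0$ need not lie in $\mathcal{A}^-$). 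Both arguments rest on the same preliminary facts and either would serve here.
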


\begin{proof}
Recall that $K_1 = \sigma^-$. We can write
\begin{equation}\label{K}
\frac{K_m}{m} = \frac{K_1}{m} + \frac{1}{m}\sum_{i=2}^m (K_i - K_{i-1}) = \frac{K_1}{m} + \sum_\beta  \frac{1}{m}\sum_{i=2}^m (K_i - K_{i-1}) \textbf{1}_{\{A_{K_{i-1}}=\beta\}}. 
\end{equation}
First note that $\displaystyle \frac{K_1}{m} \to 0 \ a.s.$ when $m \to \infty$, since $K_1 < +\infty \ a.s.$ By the strong Markov property we have that, conditionally on $(A_{K_{i-1}})_{i \geq 2}$, the random variables $(K_i-K_{i-1})_{i \geq 2}$ are all independent and the distribution of $K_i-K_{i-1}$ depends only on $A_{K_{i-1}}$ and we have
$\P(K_i-K_{i-1}=\ell\  |  A_{K_{i-1}} = \alpha) = \P_\alpha(K_1 = \ell)$.
Therefore, the couples $Y_i := (A_{K_{i-1}}, K_i-K_{i-1}), i \geq 2$ form a Markov chain on $\mathcal{A}^- \times \N$, with transition probabilities
$
\P(Y_i = (\beta,\ell) \ | Y_{i-1} = (\alpha,k)) = q_{\alpha \beta} \P_\beta(K_1 = \ell)$.
Recall that the restriction ${\bf \tilde Q}$ of the matrix ${\bf  Q}$ to the subspace $\mathcal{A}^-$ is irreducible.
Therefore, the Markov chain $(Y_i)_i$ is also irreducible and we can show that
$\pi(\alpha,k):=z_\alpha \P_\alpha(K_1 = k)$ is its invariant distribution. Indeed, since $z$ is invariant for ${\bf Q}$, we easily deduce that
$$
\sum_{\alpha,k} \pi(\alpha,k) \cdot q_{\alpha \beta} \P_{\beta}(K_1=\ell) = \pi(\beta,\ell).
$$

For fixed $\beta$, when applying the ergodic theorem to the Markov chain $(Y_i)_i$ and the function $\varphi_\beta(\alpha, k) := k \textbf{1}_{\{\alpha=\beta\}}$, we deduce
$$
\frac{1}{m}\sum_{i=2}^m (K_i - K_{i-1}) \textbf{1}_{\{A_{K_{i-1}}=\beta\}} \longrightarrow \sum_{\alpha,k} \varphi_\beta(\alpha,k)\pi(\alpha,k) =  z_\beta \E_\beta(K_1) \ \ a.s. 
$$
when $m \to \infty$.
Taking the sum over $\beta$ and using the relation (\ref{K}) gives the desired result. 
\end{proof}

\noindent {\bf Proof of Theorem \ref{res:Mn}:}\\
The proof is inspired from \cite{KDe92}.

Given $(A_{K_i})_{i \geq 0}$, the random variables $(Q_i)_{i \geq 1}$ are independent and the \textit{cdf} of $Q_i$ is $F_{A_{K_{i-1}}A_{K_i}}$.  Therefore
\begin{eqnarray*}
 \P_{\alpha}\left ( M_{K_m}\leq y  \right)&=& \E_\alpha\left [ \prod_{i=1}^mF_{A_{K_{i-1}}A_{K_i}}(y)\right ]\\
 &=&  \E_\alpha\left [ \exp\left \{\sum_{\beta,\gamma \in \mathcal{A}} m \psi_{\beta\gamma}(m)\log(F_{\beta\gamma}(y))\right \}\right ],
\end{eqnarray*} 
with $\psi_{\beta\gamma}(m):=\#\{i:1 \leq i\leqp m,A_{K_{i-1}}=\beta,A_{K_{i}}=\gamma\}/m$. Given that $A_0 = \alpha \in \mathcal{A}^-$, the states $(A_{K_i})_{i \geq 0}$ form an irreducible Markov chain on $\mathcal{A}^-$ of transition matrix ${\bf \tilde Q}=(q_{ \beta \gamma})_{\beta, \gamma \in \mathcal{A}^-}$ and stationary frequency vector $\tilde z = (z_\beta)_{\beta \in \mathcal{A}^-} > 0$. 

Consequently, for $\beta, \gamma \in \mathcal{A}^-$ the ergodic theorem implies that $\psi_{\beta\gamma}(m)\longrightarrow z_{\beta}q_{\beta\gamma}$ \textit{a.s.} when $m\rightarrow\infty$. On the other hand, for any $\alpha \in \mathcal{A}$, if $\beta \in \mathcal{A} \setminus \mathcal{A}^-$, then $\psi_{\beta\gamma}(m)\leqp 1/m$ and thus $\psi_{\beta\gamma}(m)\longrightarrow 0$ \textit{a.s.} when $m\rightarrow\infty$, for any $\gamma \in  \mathcal{A}$. With $z_{\beta} = 0$ for $\beta \in \mathcal{A} \setminus \mathcal{A}^-$, we thus have
$\psi_{\beta\gamma}(m)\longrightarrow z_{\beta}q_{\beta\gamma}$ \textit{a.s.} when $m\rightarrow\infty$, for every $\beta, \gamma \in \mathcal{A}$. 

Denoting $d_{\beta\gamma}(m):=m\left[1-F_{\beta\gamma}\left(\frac{\log m}{\theta^*}+x\right)\right]$ 
and using the fact that $d_{\beta\gamma}(m)$ are uniformly bounded in $\beta$ and $\gamma$, we have
\begin{eqnarray*}
\underset{m\rightarrow\infty}{\lim}\P_{\alpha}\left(M_{K_m}\leq\frac{\log m}{\theta^*}+x\right)
&=&\underset{m\rightarrow\infty}{\lim}\E_{\alpha}\left [\exp\left(-\sum_{\beta,\gamma \in \mathcal{A}}\psi_{\beta\gamma}(m)d_{\beta\gamma}(m)\right )\right ] \\
&=&\underset{m\rightarrow\infty}{\lim}\exp\left(-\sum_{\beta,\gamma \in \mathcal{A}}z_{\beta}q_{\beta\gamma}d_{\beta\gamma}(m)\right ).
\end{eqnarray*}
Since
$$\sum_{\gamma \in \mathcal{A}}q_{\beta\gamma}d_{\beta\gamma}(m)=m\left[1-F_{\beta}\left(\frac{\log m}{\theta^*}+x\right)\right],$$
$$\underset{m\rightarrow\infty}{\lim}\P_{\alpha}\left(M_{K_m}\leq \frac{\log m}{\theta^*}+x\right)
=\underset{m\rightarrow\infty}{\lim}\exp \left(-m \sum_{\beta \in \mathcal{A}^-} z_{\beta}\left [1-F_{\beta}\left(\frac{\log m}{\theta^*}+x\right )\right]\right).$$
But $$1-F_{\beta}\left(\frac{\log m}{\theta^*}+x\right)
=\P_{\beta}\left(Q_1> \frac{\log(m)}{\theta^*}+x\right)
=\P_{\beta}\left(Q_1> \left \lfloor\frac{\log(m)}{\theta^*}+x\right\rfloor\right),$$
%Or $\lfloor \frac{1}{d}\left (\frac{\log(m)}{\theta^*}+x\right )\times d \rfloor)$ ?\par
and hence, with $\displaystyle y=y(m):=\frac{\log(m)}{\theta^*}+x$ we get, using Theorem \ref{res:Q1}:
\begin{align*}
1-F_{\beta}\left (\frac{\log m}{\theta^*}+x\right )& \underset{m\rightarrow\infty}{\sim}
\P_{\beta}\left(S^+> \left \lfloor\frac{\log(m)}{\theta^*}+x\right \rfloor\right)\\
&\ \ \ \ \ \ \ -\sum_{k < 0}\sum_{\gamma \in \mathcal{A}^-} \P_{\gamma}(S^+>\lfloor y \rfloor -kd) \times \ \P_{\beta} (S_{\sigma^-}=kd, A_{\sigma^-}=\gamma)\ .
\end{align*}

This further leads to
\begin{align*}
& \underset{m\rightarrow\infty}{\lim}\P_{\alpha}\left(M_{K_m}\leq \frac{\log m}{\theta^*}+x\right)
=\underset{m\rightarrow\infty}{\lim}
\exp\left \{-\sum_{\beta \in \mathcal{A}^-} mz_{\beta} \P_{\beta}\left(S^+> \left \lfloor\frac{\log(m)}{\theta^*}+x\right\rfloor\right)\right \}\\
& \ \ \ \ \ \ \ \ \ \ \ \ \ \ \ \ \ \ \times\exp\left \{
\sum_{k < 0}\sum_{\gamma \in \mathcal{A}^-} \P_{\gamma}(S^+>\lfloor y \rfloor -kd) \cdot\sum_{\beta \in \mathcal{A}^-}z_{\beta}\P_{\beta}\left(S_{\sigma^-}=kd, A_{\sigma^-}=\gamma\right )
\right \}.
\end{align*}

Since $K_{m(n)} \leq n \leq K_{m(n)+1}$ and $m(n) \longrightarrow \infty$ \textit{a.s.}, Lemma \ref{Km} implies that $\displaystyle \frac{n}{m(n)} \longrightarrow A^*$ \textit{a.s.} Moreover, since $M_{K_{m(n)}} \leq M_n \leq M_{K_{m(n)+1}}$, we finally obtain
\begin{align*}
& \underset{n\rightarrow\infty}{\lim}\P_{\alpha}\left(M_n\leq \frac{\log n}{\theta^*}+x\right )
= \underset{n\rightarrow\infty}{\lim}\P_{\alpha}\left(M_{K_{\lfloor n/A^*\rfloor}}\leq \frac{\log n}{\theta^*}+x\right )\\
&=\underset{n\rightarrow\infty}{\lim}
\exp\left \{-\frac{n}{A^*}\sum_{\beta \in \mathcal{A}^-} z_{\beta} \P_{\beta}\left (S^+> \left \lfloor\frac{\log(n)}{\theta^*}+x\right \rfloor\right )\right \}\\
& \times\exp\left \{ 
\sum_{k < 0}\sum_{\gamma \in \mathcal{A}^-} \P_{\gamma}\left(S^+>\left \lfloor \frac{\log(n)}{\theta^*}+x \right\rfloor -kd\right) \cdot\sum_{\beta \in \mathcal{A}^-}z_{\beta}\P_{\beta} (S_{\sigma^-}=kd, A_{\sigma^-}=\gamma)
\right \}.
\end{align*}

It remains to prove the stated expression for $A^*:= \lim_{m\rightarrow +\infty} \frac{K_m}{m} \ a.s.$ in order to finish the proof. 
%We have by standard renewal theory for Markov additive processes (see \cite{Cin75}, chapter 10)
Recall that $\sigma^- = K_1$. In Lemma \ref{Km} we proved that
$$A^* =  \sum_{\alpha} z_{\alpha}\E_\alpha(\sigma^-).$$ 

Since $(U_m(\theta))_m$ is a martingale (see Lemma \ref{UmMartingale}) and $\sigma^-$ a stopping time, using the optional sampling theorem we get
$\E_{\alpha}\left [U_{\sigma^-}(\theta)\right ]=\E_{\alpha}\left [U_0(\theta)\right ]=1.$
Consequently,
\begin{eqnarray*}
1&=& \E_{\alpha}\left [\exp(\theta\cdot S_{\sigma^-})
\frac{u_{A_{\sigma^-}}(\theta)}{u_{A_{0}}(\theta)}\frac{1}{\rho(\theta)^{\sigma^-}}\right ]\\
&=&\E_{\alpha}\left [\exp(\theta\cdot S_{\sigma^-})
\frac{u_{A_{\sigma^-}}(\theta)}{u_{\alpha}(\theta)}\frac{1}{\rho(\theta)^{\sigma^-}}\right ]\\
&=& \sum_{\beta}\E_{\alpha}\left [\exp(\theta\cdot S_{\sigma^-})
\frac{u_{\beta}(\theta)}{u_{\alpha}(\theta)}\frac{1}{\rho(\theta)^{\sigma^-}}\big|A_{\sigma^-}=\beta\right ]\cdot \P_{\alpha}(A_{\sigma^-}=\beta)\\
&=&\sum_{\beta}\frac{u_{\beta}(\theta)}{u_{\alpha}(\theta)}\E_{\alpha}\left [\frac{\exp(\theta\cdot S_{\sigma^-})}{\rho(\theta)^{\sigma^-}} \big|A_{\sigma^-}=\beta\right ]\cdot q_{\alpha\beta}.
\end{eqnarray*}
We deduce
$$u_{\alpha}(\theta)=\sum_{\beta}\E_{\alpha}\left [\frac{\exp(\theta\cdot S_{\sigma^-})}{\rho(\theta)^{\sigma^-}}\big| A_{\sigma^-}=\beta \right ]\cdot u_{\beta}(\theta)q_{\alpha\beta} .$$

Derivating the above relation leads to
\begin{align*}
&u'_{\alpha}(\theta)=\\
& \sum_{\beta}q_{\alpha\beta} u_{\beta}(\theta)
\E_{\alpha}\left [ \frac{
S_{\sigma^{-}}\exp(\theta\cdot S_{\sigma^-})\rho(\theta)^{\sigma^-}
-\exp(\theta\cdot S_{\sigma^-})\sigma^-\rho(\theta)^{\sigma^--1}\rho'(\theta)}{\rho(\theta)^{2\sigma^-}} \big| A_{\sigma^-}=\beta\right ] \\
& \hfill \ \  + \sum_{\beta}q_{\alpha\beta} u'_{\beta}(\theta)\E_{\alpha}\left [\frac{\exp(\theta\cdot S_{\sigma^-})}{\rho(\theta)^{\sigma^-}}\big| A_{\sigma^-}=\beta \right ].
\end{align*}
Since $\rho(0)=1$, we obtain for $\theta = 0$:
$$u'_{\alpha}(0)=\sum_{\beta}q_{\alpha\beta}u_{\beta}(0)\left ( \E_{\alpha}\left [ S_{\sigma^-}\big| A_{\sigma^-}=\beta\right ]- \rho'(0)\E_{\alpha}\left [ \sigma^-\big| A_{\sigma^-}=\beta\right ]\right ) + \sum_{\beta}q_{\alpha\beta}u'_{\beta}(0).$$

By the fact that $u(0)=^t(1/r,\ldots,1/r)$, we further get
$$u'_{\alpha}(0)=\frac{1}{r}\E_{\alpha}[S_{\sigma^-}]-
\frac{\rho'(0)}{r} \E_\alpha(\sigma^-)+ \sum_{\beta}q_{\alpha\beta}u'_{\beta}(0).$$
%Let us denote $A^*_{\alpha\beta}:=\E_{\alpha}[\sigma^-\big| A_{\sigma^-}=\beta]$.
From the last relation we deduce
\begin{equation}\label{eq1}
\sum_{\alpha}z_{\alpha}u'_{\alpha}(0)
=\frac{1}{r}\sum_{\alpha}z_{\alpha}\E_{\alpha}\left [ S_{\sigma^-}\right ]
-\frac{\rho'(0)}{r}\sum_{\alpha} z_{\alpha}\E_\alpha(\sigma^-) +\sum_{\alpha}\sum_{\beta}z_{\alpha} q_{\alpha\beta}u'_{\beta}(0).
\end{equation}

On the other hand, since $z$ is the stationnary frequency vector of the matrix ${\bf Q}$, we have $z=z \cdot {\bf Q}$ %=\left ( \sum_{\alpha} z_{\alpha} q_{\alpha\beta} \right )_{\beta}$ 
and thus
\begin{equation}\label{eq:2}
\sum_{\alpha}z_{\alpha}u'_{\alpha}(0)=^tz \cdot u'(0)=^t(z {\bf Q}) \cdot u'(0)=\sum_{\beta} {^t(z {\bf Q})_{\beta}}\cdot u'_{\beta}(0)
=\sum_{\beta}\sum_{\alpha} z_{\alpha} q_{\alpha\beta} u'_{\beta}(0).
\end{equation}
Equations (\ref{eq1}) and (\ref{eq:2}) imply that $\sum_{\alpha}z_{\alpha}\E_{\alpha}\left [ S_{\sigma^-}\right ]=\rho'(0)\cdot\sum_{\alpha} z_{\alpha} \E_\alpha(\sigma^-)$
and thus $A^*=\sum_{\alpha} z_{\alpha} \E_\alpha(\sigma^-)=\frac{1}{\rho'(0)}\sum_{\alpha}z_{\alpha}\E_{\alpha}\left [ S_{\sigma^-}\right ]$. 
Using now the fact that $\rho'(0)=\E[f(A)]$ (see Lemma \ref{lem:rho'}) gives the stated expression for $A^*$. $\hfill\square$

%\bigskip

%\noindent ??? $$K^*=\frac{C^*}{A^*}=\frac{\sum_{\alpha}z_{\alpha}e_{\alpha}(\infty)}{\sum_{\alpha}z_{\alpha}\E_{\alpha}\left [ S_{\sigma^-}\right ]}\cdot \E[f(A)].$$
% 
%
%\noindent ??? The second exponential is a correcting term, and there is also a correction coming from the first exponential.

\section{Applications and computational methods}\label{sec:comp}
%%%%%%%%%%%%%%%%%%%%%%%%%%%%%%%%%%%%%%%%%%%%%%%
 
In order to simplify the presentation, we suppose in this section that $d=1$. Let $-u,\dots,0,\dots,v$ be the possible scores, with $u, v \in \mathbb{N}$.

For $-u \leq j \leq v$, we introduce the matrix ${\bf P^{(j)}}$ with entries 
$$P^{(j)}_{\alpha \beta} := \P_{\alpha}(A_1=\beta, f(A_{1})= j)$$
for $\alpha, \beta \in \mathcal{A}$.
Note that $ P^{(f(\beta))}_{\alpha\beta}=p_{\alpha\beta}$, $P^{(j)}_{\alpha\beta}=0$ if $j\neq f(\beta)$ and ${\bf P}=\sum_{j=-u}^{v}{\bf P^{(j)}}$, where ${\bf P}=(p_{\alpha \beta})_{\alpha, \beta}$ is the transition probability matrix of the Markov chain $(A_i)_i$.

In order to obtain the approximate distribution of $Q_1$ given in Theorem \ref{res:Q1}, we need to compute the quantities $Q^{(\ell)}_{\alpha \beta}$ for $-u \leq \ell \leq v, \alpha, \beta \in \mathcal{A}$ . This is the topic of the next subsection. We denote ${\bf Q^{(\ell)}}$ the matrix $(Q^{(\ell)}_{\alpha \beta})_{\alpha, \beta \in \mathcal{A}}$.

\subsection{Computation of ${\bf Q^{(\ell)}}$ for $-u \leq \ell \leq v$, and  of ${\bf Q}$}\label{SubSec:Q}
%%%%%%%%%%%%%%%%%%%%%%%%%%%%%%%%%%%%%%%%%%%%%%%%%%%%%%%%%%%%%%%%%%%%

Recall that $Q^{(\ell)}_{\alpha \beta}=\P_\alpha(S_{\sigma^-} = \ell, A_{\sigma^-} = \beta)$, and hence $Q^{(\ell)}_{\alpha \beta}=0$ for $\ell\geq 0$ or $\beta \in \mathcal{A} \setminus  \mathcal{A}^-$. Note also that $\sigma^-=1$ if $f(A_1) < 0$. 
Let $-u \leq \ell \leq -1$.
When decomposing with respect to the possible values $j$ of $f(A_{1})$, we obtain:
\begin{align*}
Q^{(\ell)}_{\alpha \beta}&= \P_\alpha( A_{1} = \beta, f(A_1) = \ell) +  \P_\alpha(S_{\sigma^-} = \ell , A_{\sigma^-} = \beta, f(A_1) = 0) \\
& \ \ \ \ + \sum_{j = 1}^{v} \P_\alpha(S_{\sigma^-} = \ell , A_{\sigma^-} = \beta, f(A_1) = j).
\end{align*}
Note that the first term on the right hand side is exactly $P^{(\ell)}_{\alpha \beta}$ defined at the beginning of this section.
We further have, by the law of total probability and the Markov property:
\begin{align*}
  \P_\alpha(S_{\sigma^-} = \ell , A_{\sigma^-} = \beta, f(A_1) = 0) 
&=  \sum_{\gamma} P^{(0)}_{\alpha \gamma} \ \P_\alpha(S_{\sigma^-} = \ell , A_{\sigma^-} = \beta \ | A_1 = \gamma, f(A_1) = 0)  \\
&=  \sum_{\gamma} P^{(0)}_{\alpha \gamma} \ \P_\gamma(S_{\sigma^-} = \ell , A_{\sigma^-} = \beta)=( {\bf P^{(0)}}{\bf Q^{(\ell)}})_{\alpha \beta}.
\end{align*}

Let $j \in \{1,\ldots,v\}$ be fixed. We have
\begin{align*}
\P_\alpha(S_{\sigma^-} &= \ell , A_{\sigma^-} = \beta, f(A_1) = j)= \sum_{\gamma} P^{(j)}_{\alpha \gamma} \ \P_\alpha(S_{\sigma^-} = \ell , A_{\sigma^-} = \beta \ | A_1 = \gamma, f(A_1) = j).
%&=\sum_\gamma P^{(j)}_{\alpha \gamma} \P_\gamma(S_{\sigma^-} = \ell , A_{\sigma^-} = \beta).
\end{align*}

For every possible $s \geq 1$, we denote $\mathcal{T}_s$ the set of all possible $s$-tuples $t=(t_1,\dots,t_s)$ verifying $-u\leq t_i\leq -1$ for $i=1,\dots, s$, $\ t_1+\dots+t_{s-1}\geq -j >0$ and $t_1+\dots+t_{s}=\ell-j >0$. Decomposing the possible paths from $-k$ to $\ell$ gives 
$$
Q^{(\ell)}_{\alpha \beta}
= P^{(\ell)}_{\alpha \beta}+({\bf P^{(0)}} {\bf Q^{(\ell)}})_{\alpha \beta}+\sum_{j=1}^v\left ( {\bf P^{(j)}} \sum_s \sum_{t\in{\cal T}_s}\prod_{i=1}^s {\bf Q^{(t_i)}}\right)_{\alpha \beta},
$$
hence 
\begin{equation}\label{Eq:RecurrenceQell}
{\bf Q^{(\ell)}}= {\bf P^{(\ell)}}+{\bf P^{(0)}} {\bf Q^{(\ell)}}+\sum_{j=1}^v {\bf P^{(j)}}\sum_s \sum_{t\in{\cal T}_s}\prod_{i=1}^s {\bf Q^{(t_i)}}.
\end{equation}

Recalling that ${\bf Q}=(q_{\alpha\beta})_{\alpha, \beta}$ with $q_{\alpha\beta}=\P_{\alpha}(A_{\sigma^-}=\beta)=\sum_{\ell<0}Q^{(\ell)}_{\alpha \beta}$,
we have
\begin{equation}\label{Eq:Q}
{\bf Q}=\sum_{\ell <0}{\bf Q^{(\ell)}}.
\end{equation}

\noindent \textbf{Example:} In the case where $u=v=1$, we only have the possible values $\ell=-1$, $j=1$, $s=2$ and $t_1=t_2=-1$, thus
\begin{equation}\label{Eq:RecQellExple}
{\bf Q^{(-1)}}={\bf P^{(-1)}}+{\bf P^{(0)}}\cdot {\bf Q^{(-1)}}+{\bf P^{(1)}}({\bf Q^{(-1)}})^2\mbox{ and } {\bf Q}={\bf Q^{(-1)}}.
\end{equation}

\subsection{Computation of $L_{\alpha \beta}^{(\ell)}$ for $0 \leq \ell \leq v$, and of $L_{\alpha}(\infty)$}\label{Subsec:L}
%%%%%%%%%%%%%%%%%%%%%%%%%%%%%%%%%%%%%%%%%%%%%%%%%%%%%%%%%%%%%%%%%%%%%%%%%%%%%%%%
Recall that $L_{\alpha \beta}^{(\ell)}=\P_\alpha(S_{\sigma^+} = \ell ,\sigma^+ < \infty,  A_{\sigma^+} = \beta)$. Denote ${\bf L^{(\ell)}}:=(L^{(\ell)}_{\alpha\beta})_{\alpha,\beta}$. %which is a $r\times r$ square matrix.\par
%For $\ell\geq 0$ let us note the following $r$ vectors $F_{S^+,\cdot}^{(\ell)}=(F_{S^+,\alpha}^{(\ell)})_{\alpha}$, with $L(\infty)=(L_{\alpha}(\infty))_{\alpha}$ and let define the $r\times r$ matrices
%$M^{(\ell)}$ with $M^{(0)}_{\alpha\alpha}=1$, $M^{(\ell)}_{\alpha\alpha}=1$ when $\ell>0$, and $M^{(\ell)}_{\alpha\beta}=-L^{(\ell)}_{\alpha\beta}$ when $\alpha\neq\beta$.
First note that $L_{\alpha \beta}^{(\ell)}=0$ for $\ell\leq0$ or $\beta \in \mathcal{A} \setminus \mathcal{A}^+ $. Using a similar method as the one used to obtain $Q^{(\ell)}_{\alpha \beta}$ in the previous subsection, we denote for every possible $s \geq 1$, 
%with ${\cal T}'=\cup_s (t_1,\dots,t_s)$ 
$\mathcal{T}'_s$ the set of all $s$-tuples $t=(t_1,\dots,t_s)$ verifying $1\leq t_i\leq v$ for $i=1,\dots, s$, $\ t_1+\dots+t_{s-1}\leq k$ and $t_1+\dots+t_{s}=\ell+k >0$. 

For every $0<\ell\leq v$ we then have
\begin{equation}\label{Eq:RecurrenceLell}
{\bf L^{(\ell)}}= {\bf P^{(\ell)}}+{\bf P^{(0)}} {\bf L^{(\ell)}}+\sum_{k=1}^u {\bf P^{(-k)}}\sum_s \sum_{t\in{\cal T}'_s}\prod_{i=1}^s {\bf L^{(t_i)}}
\end{equation}

Since $L_{\alpha}(\infty)=\P_{\alpha}(\sigma^+<\infty)=\sum_{\beta}\sum_{\ell =1}^v L_{\alpha \beta}^{(\ell )}$, and denoting by ${\bf L}(\infty)$ the column vector containing all $L_{\alpha}(\infty)$ for $\alpha \in \mathcal{A}$, and by $\1_r$ the column vector of size $r$ with all components equal to 1, we can write
\begin{equation}
{\bf L}(\infty)=\sum_{\ell=1}^v {\bf L^{(\ell)}}\cdot \1_r.
\end{equation}

\noindent \textbf{Example:} In the case where $u=v=1$, equation (\ref{Eq:RecurrenceLell}) gives
\begin{equation}\label{Eq:LCassimple}
{\bf L^{(1)}}={\bf P^{(1)}}+{\bf P^{(0)}}\cdot {\bf L^{(1)}}+{\bf P^{(-1)}}\cdot ({\bf L^{(1)}})^2,
\end{equation}
\begin{equation}\label{Eq:LCassimple2}
{\bf L^{(\ell)}} = 0 \mbox{ for } \ell > 1, \mbox{ thus } {\bf L}(\infty)= {\bf L^{(1)}}\cdot \1_r.
\end{equation}

\subsection{Computation of $F_{S^+,\alpha}{(\ell)}$ for $\ell \geq 0$}\label{Subsec:FS+}
%%%%%%%%%%%%%%%%%%%%%%%%%%%%%%%%%%%%%%%%%%%%%%%%%%%%%
For $\ell\geq 0$ let us denote  ${\bf F}_{S^+,\cdot}{(\ell)}:=(F_{S^+,\alpha}{(\ell)})_{\alpha \in \mathcal{A}}$, seen as a column vector of size $r$. From Theorem \ref{res:exactS+} we deduce that for $\ell=0$ and every $\alpha\in{\cal A}$ we have
$$F_{S^+,\alpha}{(0)}=1 - L_{\alpha}(\infty).$$

%That brings $r$ equations to extract $(F_{S^+,\alpha}^{(0)})_{\alpha}$.\par
For $\ell=1$ and every $\alpha \in \mathcal{A}$ we get
$$F_{S^+,\alpha}{(1)}=1 - L_{\alpha}(\infty) + \sum_{\beta \in \mathcal{A}} L^{(1)}_{\alpha \beta} \ F_{S^+,\beta}{(0)}.$$
With ${\bf L}(\infty)=(L_{\alpha}(\infty))_{\alpha \in \mathcal{A}}$, seen as a column vector, we can write
\begin{align*}
{\bf F}_{S^+,\cdot}{(1)}&=1 -{\bf L}(\infty) +  {\bf L^{(1)}F}_{S^+,\cdot}{(0)},\\
{\bf F}_{S^+,\cdot}{(\ell)}&=1 - {\bf L}(\infty)+\sum_{k=1}^{\ell} {\bf L^{(k)} F}_{S^+,\cdot}{(\ell-k)}, \  \forall\ell\geq 1.
\end{align*}
See Subsection \ref{Subsec:L} for how to compute ${\bf L^{(k)}}$ for $k \geq 1$ and ${\bf L}(\infty)$.

\subsection{Application in a simple case}\label{subsec:simplecase}
%%%%%%%%%%%%%%%%%%%%%%%%%%%%%%%%%%%%%%%%%
Let us consider the simple case where the possible score values are $-1,0,1$, corres\-ponding to the case $u=v=1$.
We will use the results in the previous subsections (see Equations (\ref{Eq:RecQellExple}, \ref{Eq:LCassimple}, \ref{Eq:LCassimple2})) to derive the distribution of the maximal non-negative partial sum $S^+$. This distribution can be determined using the following matrix equalities:

\begin{equation}\label{eq:Linfini}
{\bf L}(\infty)=\left ( \sum_{\beta} L^{(1)}_{\alpha\beta}\right )_{\alpha}={\bf L^{(1)}}\cdot\1_r,
\end{equation}
with ${\bf L^{(1)}}$ given in Equation (\ref{Eq:RecurrenceLell}) and
\begin{eqnarray}\label{Eq:LoiExacteS+CasSimple}
{\bf F}_{S^+,\cdot}{(0)}&=&1-{\bf L}(\infty),\\ 
%M^{(1)}\cdot F_{S^+,\cdot}^{(1)}&=&L(\infty)+L^{(0)}\cdot F_{S^+,\cdot}^{(0)}\\
%F_{S^+,\cdot}^{(1)}&=&1-L(\infty)+t({\bf L^{(1)}})\cdot F_{S^+,\cdot}^{(0)}\\
%F_{S^+,\cdot}^{(2)}&=&1-L(\infty)+t({\bf L^{(1)}})\cdot F_{S^+,\cdot}^{(1)}+t({\bf L^{(2)}})\cdot F_{S^+,\cdot}^{(1)}\\
                   %&=&1-L(\infty)+t({\bf L^{(1)}})\cdot F_{S^+,\cdot}^{(0)}\\
%F_{S^+,\cdot}^{(3)}&=&1-L(\infty)+t({\bf L^{(1)}})\cdot F_{S^+,\cdot}^{(2)}+t({\bf L^{(2)}})\cdot F_{S^+,\cdot}^{(1)}+t({\bf L^{(3)}})\cdot F_{S^+,\cdot}^{(0)}\\	
										%&=&	1-L(\infty)+t({\bf L^{(1)}})\cdot F_{S^+,\cdot}^{(2)}\\
{\bf F}_{S^+,\cdot}{(\ell)}&=& 1-{\bf L}(\infty)+{\bf L^{(1)} F}_{S^+,\cdot}{(\ell-1)}.						
\end{eqnarray}
This allows to further derive the approximate distributions of $Q_1$ and $M_n$ given in Theorems \ref{res:Q1} and \ref{res:Mn}.

We present hereafter a numerical application for the local score of a DNA sequence. We suppose that we have a Markovian sequence whose possible letters are $\{A,C,G,T\}$ and whose transition probability matrix is given by
$${\bf P}=\left (
\begin{array}{cccc}
1/2&1/6&1/6&1/6\\
1/4&1/4&1/4&1/4\\
1/6&1/6&1/6&1/2\\
1/6&1/6&1/2&1/6\\
\end{array}
\right)\ .$$
We choose the respective scores $-1,-1,0,1$ for the letters $A, C,G,T$ for which Hypothesis (\ref{Hyp:ScoreMoyNeg}) and 
(\ref{Hyp:ProbaScorePos}) are verified. 
We use the successive iteration methodology described in Equation (5.12) of \cite{KDe92}  in order to compute $\bf{L^{(1)}}$ and $\bf{Q^{(-1)}}$, solutions of Equations (\ref{Eq:RecQellExple}) and (\ref{Eq:LCassimple}), from which we derive the formulas proposed in our Theorems \ref{res:exactS+}, \ref{res:Q1} and \ref{res:Mn} for the approximate distributions of $S^+$, $Q_1$ and $M_n$ respectively. We also compute the different approximations proposed in Karlin and Dembo \cite{KDe92}. %Equation (1.27).%, see also Equation \ref{Res:AppxMnKDe}).
We then compare these results with the  corresponding empirical distributions computed using a Monte Carlo approach based on $10^5$ simulations. We can see in Figure \ref{Fig:Splus}, left panel, that for $n=300$ the empirical  \textit{cdf} of $S^+$ and the one obtained using Theorem \ref{res:exactS+} match perfectly. We can also visualize the fact that Theorem \ref{res:exactS+} improves the approximation of Karlin and Dembo in Lemma 4.3 of \cite{KDe92} for the distribution of $S^+$. The right panel of Figure \ref{Fig:Splus} allows to compare, for different values of the sequence length $n$, the empirical  \textit{cdf} of $S^+$ and the exact \textit{cdf} given in Theorem \ref{res:exactS+}: we can see that our formula performs very satisfactory even for sequence length $n=100$.

In this simple example the approximation of the distribution of $Q_1$ given in Theorem \ref{res:Q1} and the one given in Lemma 4.4 of \cite{KDe92} give quite similar numerical values. %(see Figure \ref{Fig:Q1CompareEmpGM_Gn300}).
%\begin{flushleft}
%\begin{figure}
%\centering
%\includegraphics[scale=0.5]{Cdf_Q1_EmpKGM_Gn300.pdf} % CHANGER LA FIGURE SI UTILISEE
%\caption{Comparison between the approximation of the cumulative distribution function of $Q_1$ of \cite{KDe92} and the one proposed in Theorem \ref{res:Q1}, for the simple scoring scheme $(-1,0,+1)$, for $A_0=G$ and $n=300$.}
%\label{Fig:Q1CompareEmpGM_Gn300}
%\end{figure}
%\end{flushleft}

In Figure \ref{Fig:Mn} we compare three approximations for the \textit{cdf} of $M_n$: the Karlin and Dembo's approximation given in Equation (1.27) of \cite{KDe92} (see also Equation (\ref{Res:AppxMnKDe})), our approximation proposed in Theorem \ref{res:Mn}, and a Monte Carlo approximation. For the simple scoring scheme of this application, the parameter $K^*$ of the Karlin and Dembo's approximation for $M_n$ is given by Equation (5.6) of \cite{KDe92}
$$K^*=(e^{-\theta^*}-e^{-2\theta^*})\cdot\E[-f(A)]\cdot\sum_{\gamma}z_{\gamma}u_{\gamma}(\theta^*)\cdot\sum_{\gamma}w_{\gamma}/u_{\gamma}(\theta ^*).$$
More precisely, in the left panel we plot the probability $p(n,x):=\P\left(M_n\leq \frac{\log(n)}{\theta^*}+x\right)$ as a function of $n$, for a fixed value $x=-8$. This illustrates the asymptotic behavior of this probability with growing $n$. We can also observe the fact that Karlin and Dembo's approximation does not depend on $n$.  
%For larger values of $x$ the improvement is less important (see Figure \ref{Fig:Mn_xgrandissant}) 
In Figure \ref{Fig:Mn}, right panel, we compare the approximation of Karlin and Dembo \cite{KDe92} for the same probability $p(n,x)$ with our approximation, for varying $x$ and fixed $n=100$. 
We observe that the improvement brought by our approximation is more significant for negative values of $x$. For fixed $n$ and extreme deviations (large $x$) the two approximations are quite similar and accurate.
%The average relative error of our approximation with respect to the empirical %distribution function is $20 
%\begin{figure}
%\centerline{
%\begin{tabular}{cc}
%\includegraphics[width=0.5\textwidth,trim=0 0 0 50,clip]{CDF_Mn_x=-4_n_variant.pdf}&
%\includegraphics[width=0.5\textwidth,trim=0 0 0 50,clip]{CDF_Mn_x=1_n_variant.pdf}
%\end{tabular}
%}
%\caption{Approximations of $\P\left(M_n\leq \frac{\log(n)}{\theta^*}+x\right)$ of Karlin and Dembo's result \cite{KDe92}, the one proposed in Theorem \ref{res:Mn} and Monte Carlo empirical values in function of $n$ for two fixed $x$, in the simple case scoring scheme $(-1,0,+1)$.}
%\label{Fig:Mn_xgrandissant}
%\end{figure}

%Figure \ref{Fig1:Mn_xvariant} compares the approximation of Karlin and Dembo \cite{KDe92}for the cumulative distribution function of $M_n$ with our approximation,  for varying $x$. 
%\begin{figure}
%\begin{center}
%\includegraphics[scale=0.5]{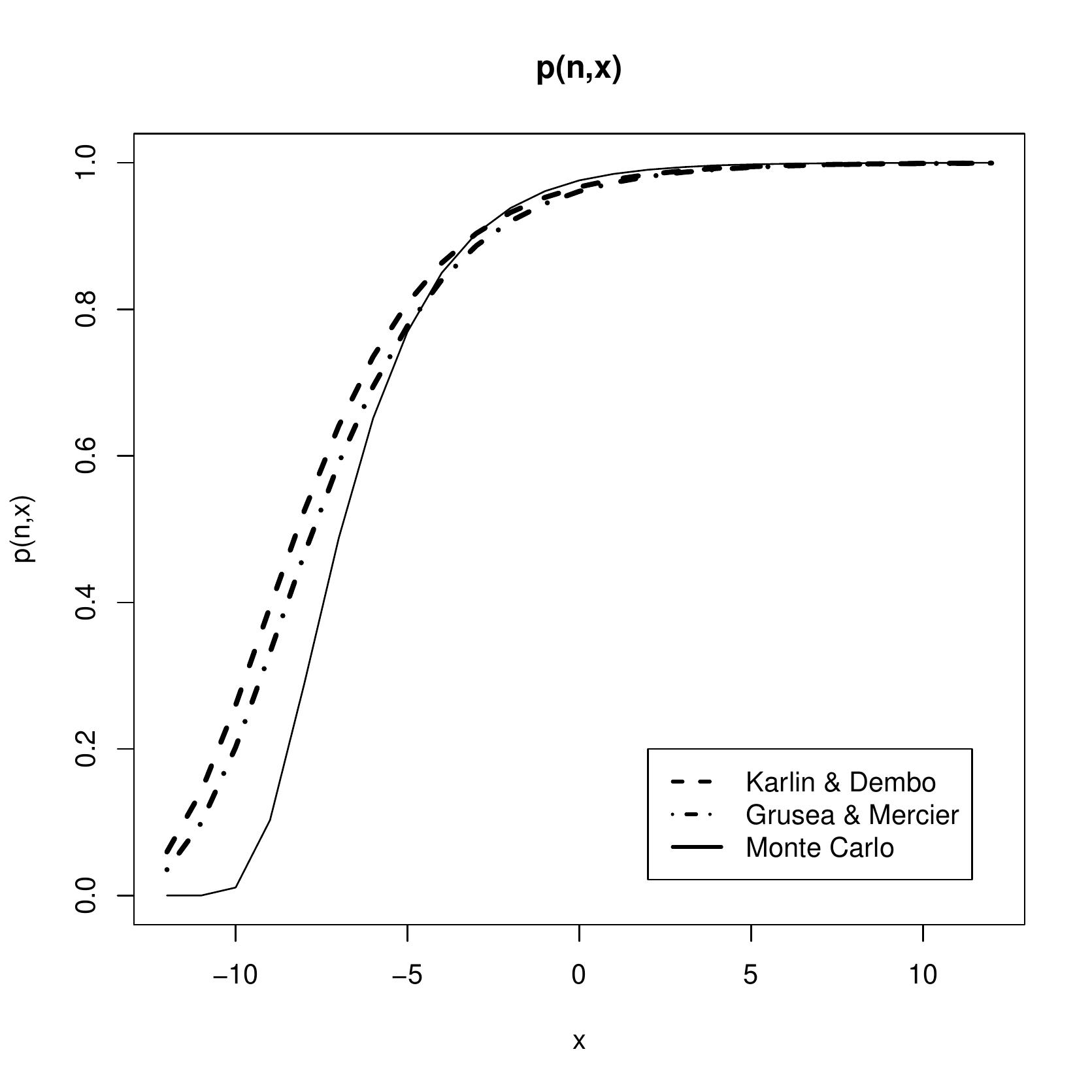}
%\caption{Different approximations for $\P\left(M_n\leq \frac{\log(n)}{\theta^*}+x\right)$: the Karlin and Dembo's result \cite{KDe92}, the one we propose in Theorem \ref{res:Mn} and the Monte Carlo estimation, as function of $x$, for fixed $n=100$, in the simple scoring scheme $(-1,0,+1)$.}
%\label{Fig:Mn_xvariant}
%\end{center}
%\end{figure}
\begin{flushleft}
\begin{figure}%[h]
\centerline{
\begin{tabular}{cc}
\includegraphics[width=0.5\textwidth,trim=0 0 0 50,clip]{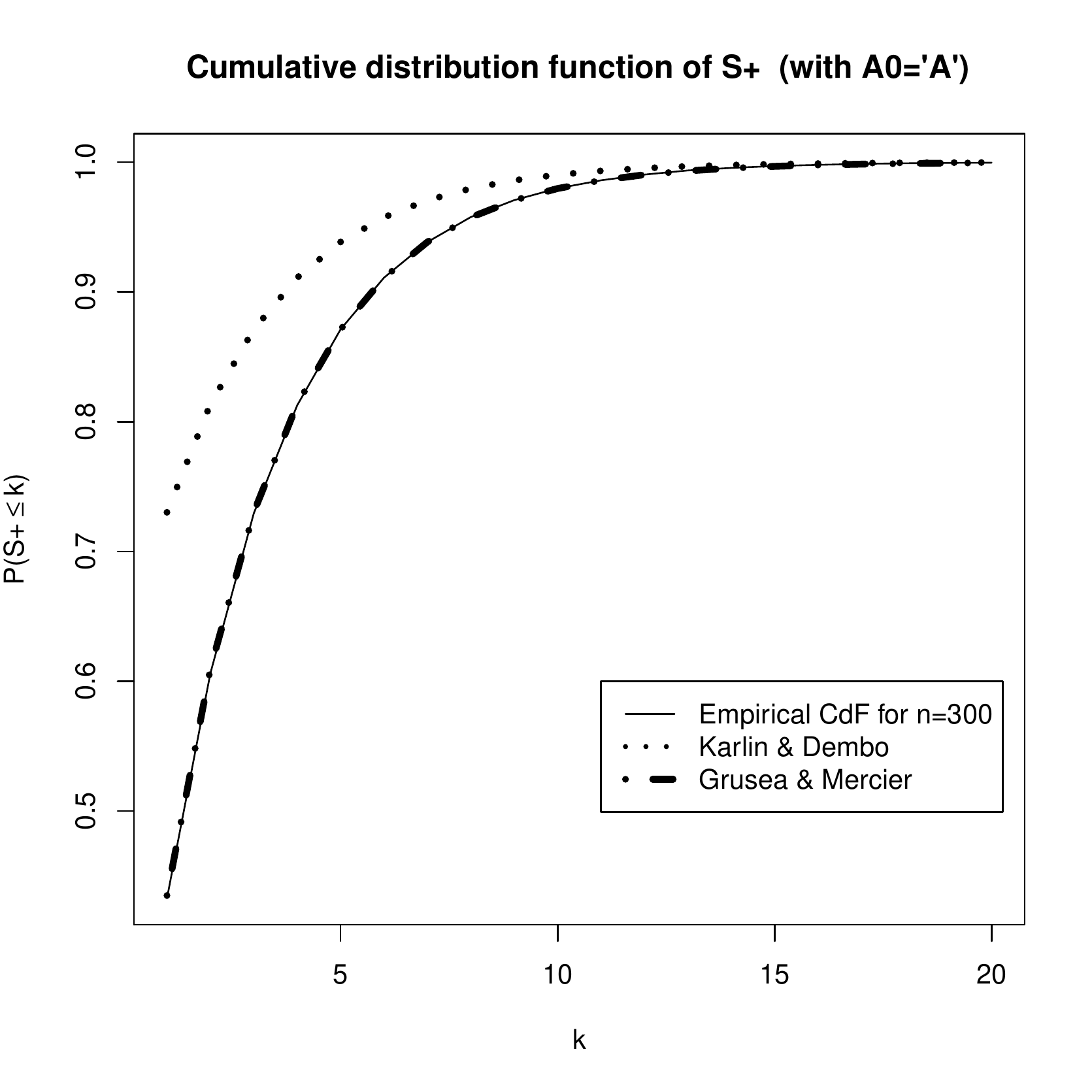}&
\includegraphics[width=0.5\textwidth,trim=0 0 0 50,clip]{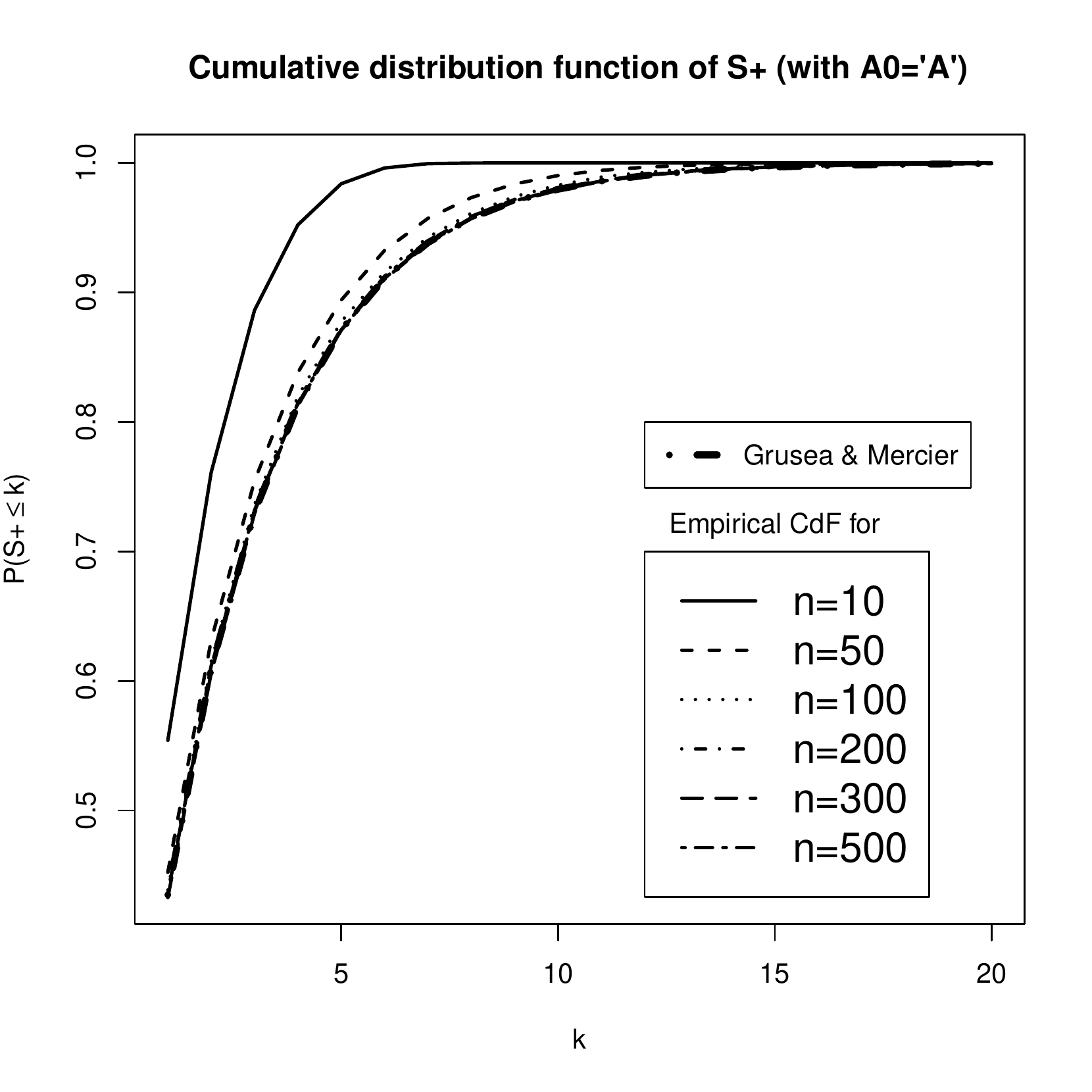}
\end{tabular}
}
\caption{Cumulative distribution function of $S^+$ for the simple scoring scheme $(-1,0,+1)$ and $A_0=$``$A$''. Left panel: Comparison between the approximation of Karlin and Dembo proposed in \cite{KDe92}, a Monte Carlo estimation with sequences of length $n=300$, and our exact formula proposed in Theorem \ref{res:exactS+}. Right panel: Comparison, for different values of $n$, of the Monte Carlo empirical cumulative distribution function and the exact one given in Theorem \ref{res:exactS+}.}
\label{Fig:Splus}
\end{figure}
\end{flushleft}
%In Figure \ref{Fig:Q1CompareEmpGM_Gn300} we can visually compare the two approximations for the cumulative distribution of $Q_1$: the one given in Lemma 4.4 of \cite{KDe92} and the one derived from Theorem \ref{res:Q1}, and see the significant improvement brought by our result.

\begin{flushleft}
\begin{figure}%[h]
\centerline{
\begin{tabular}{cc}
\includegraphics[width=0.5\textwidth,trim=0 0 0 50,clip]{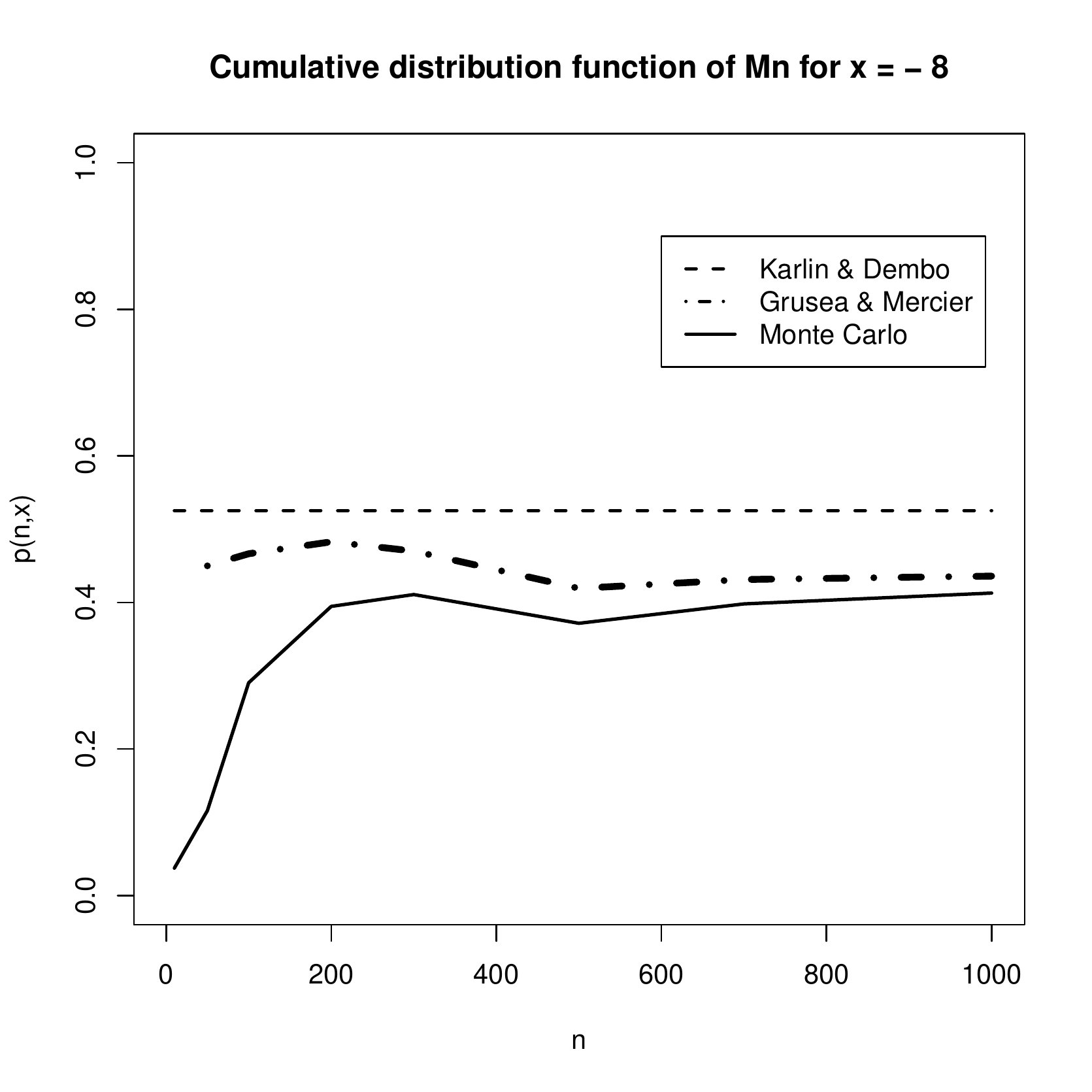}&
\includegraphics[width=0.5\textwidth,trim=0 0 0 50,clip]{CDFMnn100xvariantRemake.pdf}
\end{tabular}
}
\caption{Comparison of the different approximations for $p(n,x)=\P\left(M_n\leq \frac{\log(n)}{\theta^*}+x\right)$ with the simple scoring scheme $(-1,0,+1)$: Karlin and Dembo's result \cite{KDe92} (see Equation (\ref{Res:AppxMnKDe})), our approximation proposed in Theorem \ref{res:Mn} and Monte Carlo estimation. Left panel: $p(n,x)$ as a function of $n$, for fixed $x=-8$. Right panel: $p(n,x)$ as a function of $x$, for fixed $n=100$.}
\label{Fig:Mn}
\end{figure}
\end{flushleft}

%\acks
% Place the text of your acknowledgements after the \acks command.
% \acks generates the heading "Acknowledgements".
% If you wish to make only one acknowledgement, use \ack.
% \ack generates the heading "Acknowledgement".

% Reference list
%
% References should be in the following form (or the BibTeX file
% apt.bst should be used):
%
% For a journal:
% Surname, Initial (year). Title of paper. {\em Journal title}
% {\bf Vol,} page--range.
%
% For a book:
% Surname, Initial (year). {\em Book title}. Publisher, Address.
%
% Note the following example of a reference list.

\vspace{-2.2cm}

\end{document}